\documentclass[10pt]{article}
\usepackage[a4paper, left=3cm, right=3cm, top=38mm, bottom=38mm]{geometry}
\usepackage{setspace}
\usepackage{mathtools,amssymb,amsthm}
\RequirePackage[dvips]{graphicx}
\usepackage{xcolor}
\usepackage{nicematrix}
\usepackage{threeparttablex}
\usepackage{booktabs}
\usepackage{lipsum}
\usepackage{tikz}

\theoremstyle{plain}
\newtheorem{theorem}{Theorem}[section]
\newtheorem{lemma}[theorem]{Lemma}
\newtheorem{corollary}[theorem]{Corollary}

\newcommand{\bbGamma}{{\mathpalette\makebbGamma\relax}}
\newcommand{\makebbGamma}[2]{%
    \raisebox{\depth}{\scalebox{1}[-1]{$\mathsurround=0pt#1\mathbb{L}$}}}%
\newcommand\bbGammaT{\reflectbox{\rotatebox[origin=c]{180}{$\mathbb L$}}}

\begin{document}
\baselineskip=0.21in

\begin{center}
    \begin{spacing}{1.7}
        {\LARGE \textbf{Kirchhoff index of a nested geometric graph\\ with weighted multiple edges}}

        \vspace{5mm}
        {\large \textbf{Da-yeon Huh}}
    \end{spacing}
        \large \textit{Department of Mathematics, Sungkyunkwan University,}\\
        \large \textit{Suwon 16419, Republic of Korea}\\
        \vspace{3mm}
        {E-mail: \texttt{dayeonhuh7@gmail.com}}
\end{center}

\vspace{3mm}

\begin{abstract}
Kirchhoff index, $K\!f(\mathbb{G})$, introduced by Klein and Randić in 1993, represents the total effective resistances between all pairs of vertices in a graph $\mathbb{G}$, where each edge is regarded as a resistor.
In this paper, the Kirchhoff indices of a particular sequence of nested geometric graphs with weighted multiple edges, denoted by $\mathbb{G}_n$, are investigated.
A recurrence relation for the characteristic polynomial of the Laplacian matrix  $L(\mathbb{G}_n)$ is derived, and an explicit formula for $K\!f(\mathbb{G}_n)$ is obtained.
These facilitate the analysis of the variation of $K\!f(\mathbb{G}_n)$ as
as $n \rightarrow \infty$. 
Consequently, $K\!f(\mathbb{G}_n)$ is shown to grow asymptotically linearly, characterized by a specific asymptotic formula.
In the course of this derivation, a recurrence relation for the determinant of a block tridiagonal matrix is established.
The Kirchhoff index of a 4-regular graph constructed from $\mathbb{G}_n$ is also determined.
\end{abstract}

\vspace{1mm}

    \begin{center}
    \parbox{13cm}{
        \textbf{Keywords:} Kirchhoff index, nested geometric, weighted multiple edges,
                                        block tridiagonal matrix, 4-regular graph}
    \end{center}

\vspace{7mm}

\section{Introduction}
    The Kirchhoff index stems from Klein and Randić's idea of resistance distance \cite{klein1993resistance}.
    The concept of resistance distance between two vertices $u$ and $v$ in a weighted graph $\mathbb{G}$ involves treating $\mathbb{G}$ as an electric circuit, where each edge has a resistance equal to its weight, and measuring the effective (or equivalent) resistance across two vertices $u$ and $v$, denoted $\Omega(u,\,v)$.
    Subsequently, the Kirchhoff index is defined in \cite{bonchev1994molecular} as the sum of resistance distances between every pair of vertices as follows:
                        \[
                        K\!f(\mathbb{G})=\sum_{\{u,v\}\subset V(\mathbb{G})} \Omega(u, v),
                        \]
    where $V(\mathbb{G})$ is the vertex set of $\mathbb{G}$.
    For a detailed analysis of electrical networks via graph theory and matrix algebra, we refer the reader to the renowned classic text by Seshu and Reed \cite{seshu1961linear}.

    We now consider the following simple electrical circuit to briefly summarize the application to weighted graphs with multiple edges presented in this article.
                        \begin{figure}[h]
                        \centering
                        \includegraphics[width=5cm]{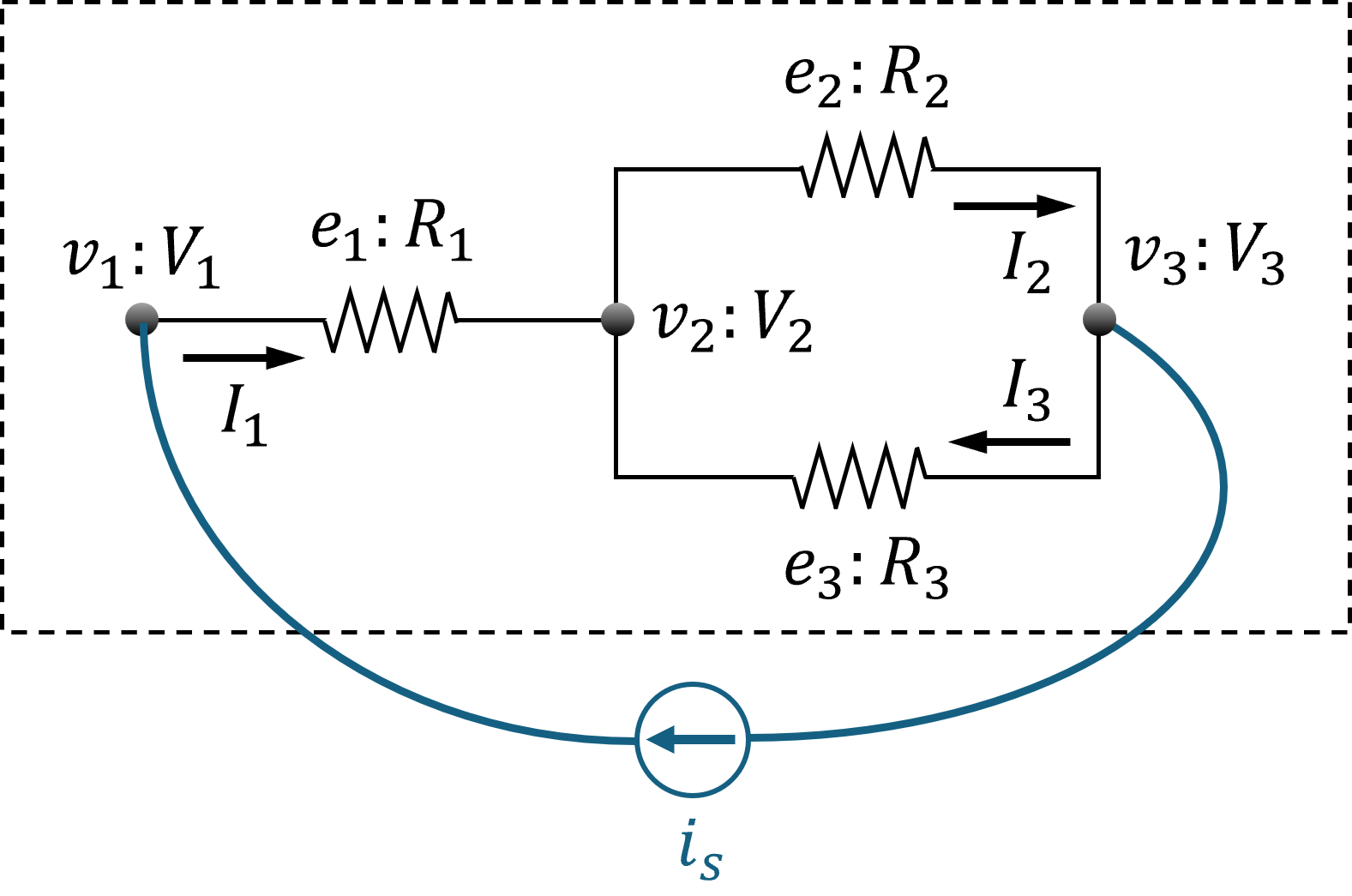}
                         \caption{\small Simple circuit with three resistors} \label{circuit}
                        \end{figure}
    The graph $\mathbb{G}$ within the dotted box in Figure \ref{circuit} consists of three vertices $\{v_{1},v_{2},v_{3}\}$ and three edges $\{e_{1},e_{2},e_{3}\}$, whose weights correspond to resistances $R_{1}$, $R_{2}$, and $R_{3}$, respectively.
    Two of them form parallel (multiple) edges connecting the same pair of vertices $(v_{2},v_{3})$.
    The currents flowing through each edge in the direction of the arrows are represented by $I_1$, $I_2$, and $I_3$ (where $I$ does not denote identity matrix here).
    Assume a test current source of value $i_s$ is connected between $v_{1}$ and $v_{3}$ to determine the value $\Omega (v_{1},v_{3})$.
    Applying Kirchhoff’s current law to each vertex yields
                    \begin{align}\label{QI}
                                    &\nonumber
                                    \begingroup
                                    \renewcommand*{\arraystretch}{0.5}
                                    \begin{matrix}\quad~\, \scriptstyle{e_{1}}~&\scriptstyle{e_{2}}~&\scriptstyle{e_{3}}
                                    \end{matrix}
                                    \endgroup \\
                            \begin{matrix}
                                    \scriptstyle{v_1} \\\scriptstyle{v_2} \\\scriptstyle{v_3}
                            \end{matrix}
                            &\begin{bmatrix*}[r]
                                    1 & 0 & 0 \,\\ -1 & 1 & -1\, \\ 0 & -1 & 1\,
                            \end{bmatrix*}
                            \begin{bmatrix}
                                    \,I_1\, \\ I_2 \\ I_3
                            \end{bmatrix}
                            =\begin{bmatrix*}[r]
                                    i_s \\ 0~ \\ -i_s
                            \end{bmatrix*}.
                    \end{align}
    Since Kirchhoff's current law contains the information about the connectivity between edges and vertices, the $3\times3$ matrix on the left side of equation (\ref{QI}) can be transformed into the form of an incidence matrix $Q$ by sign adjustment, as shown in (\ref{QI}).
    Therefore, if we define the current vector as $\boldsymbol{I}:=[\,I_1, I_2, I_3\,]^T$,
                    \begin{equation}\label{QI2}
                            Q\,\boldsymbol{I}=i_s\,(\boldsymbol{u}_1-\boldsymbol{u}_3)
                    \end{equation}
    is satisfied, where $\boldsymbol{u}_i\in\mathbf{R}^3$ is a unit vector with a $1$ in its $i$-th entry.
    Furthermore, if the potential vector at each vertex is defined as $\boldsymbol{V}:= [V_1, V_2, V_3]^T$,
                    \begin{align*}
                            Q^T\,\boldsymbol{V}=
                            \begin{bmatrix*}[c]
                                    \,V_1-V_2 \,\\ V_2-V_3 \\ V_3-V_2
                            \end{bmatrix*}
                            \,=\,
                            \begin{bmatrix*}[c]
                                    \,R_1 & 0 & 0 \,\\ 0 & \,R_2 & 0 \\ 0 & 0 & \,R_3
                            \end{bmatrix*}
                            \begin{bmatrix*}[c]
                                    \,I_1\, \\ I_2 \\ I_3
                            \end{bmatrix*}
                    \end{align*}
    is also obtained from Ohm's law.
    Assuming $R$ is a diagonal matrix with diagonal entries $R_1$, $R_2$, and $R_3$,
                    \[  R^{-1}Q^T\,\boldsymbol{V}=\boldsymbol{I} \]
    can be derived.
    By multiplying both sides by $Q$ and applying equation (\ref{QI2}), we have
                    \[  Q\,R^{-1}Q^T\,\boldsymbol{V}=i_s\,(\boldsymbol{u}_1-\boldsymbol{u}_3). \]
    If we define the Laplacian matrix $L=L(\mathbb{G})$ of the graph $\mathbb{G}$ with weighted multiple edges as
                    \begin{align*}&L=Q\,R^{-1}Q^T=
                            \begingroup
                            \renewcommand*{\arraystretch}{1.5}
                            \begin{small}\begin{bmatrix}
                                    \frac{1}{R_1}\,&-\frac{1}{R_1}&0\\
                                    -\frac{1}{R_1}\,&\,\frac{1}{R_1}+\frac{1}{R_2}+\frac{1}{R_3}\,&\,-\frac{1}{R_2}-\frac{1}{R_3}\,\\
                                    0&-\frac{1}{R_2}-\frac{1}{R_3}&\frac{1}{R_2}+\frac{1}{R_3}
                            \end{bmatrix}\end{small},
                            \endgroup
                    \end{align*}
    we arrive at the following general expression
                    \begin{align*}\label{LV}
                            L\,\boldsymbol{V}=i_s(\boldsymbol{u}_i-\boldsymbol{u}_j).
                    \end{align*}
    In 1993, Klein et al. \cite{klein1993resistance} demonstrated that the solution for $\Omega(v_i,\,v_j)$ can be expressed as
                    \begin{equation}\label{Resist}
                            \Omega(v_i,\,v_j)=\frac{|V_i-V_j|}{i_s}=(\boldsymbol{u}_i-\boldsymbol{u}_j)^T\,L^{\dagger}(\boldsymbol{u}_i-\boldsymbol{u}_j),
                    \end{equation}
    where $L^{\dagger}$ denotes the Moore-Penrose inverse of $L$, the special case of generalized inverses.  
    The reader is referred to \cite{bapat2004resistance, bapat2010graphs, kelathaya2023generalized} for more information on equation (\ref{Resist}).
    Moreover, it was shown by Klein et al. and Gutman et al. \cite{klein1993resistance, gutman1996quasi} that the Kirchhoff index $K\!f(\mathbb{G})$ satisfies
                    \begin{equation}\label{Kf_eigen}
                            K\!f(\mathbb{G})=n\,\sum_{i=2}^{n}\frac{1}{\lambda_i},
                    \end{equation}
    where $\lambda_i$ are the nonzero eigenvalues of $L(\mathbb{G})$.
    In 2003, Bapat \cite{bapat2003simple} derived another formula for  $\Omega(v_i,\,v_j)$ as given in
                    \begin{equation}\label{Resist_det}
                            \Omega(v_i,\,v_j)=\frac{det L(i,j|i,j)}{det L(i|i)},
                    \end{equation}
    where $L(i|j)$ is the matrix obtained from $L$ by deleting the $i$-th row and $j$-th column, and
    $L(i,j|i,j)$ is defined analogously (see Table \ref{notation}).
    A distinct proof for the formula (\ref{Resist_det}) is given in \cite{kagan2015equivalent}.
    In 2007, $K\!f(\mathbb{G})$ was expressed in terms of two coefficients of the characteristic polynomial $det(\lambda I- L(\mathbb{G}))=\lambda^n+a_{n-1}\lambda^{n-1}+ \cdots + a_1\lambda$, as shown in
                    \begin{align}\label{Kf_poly}
                            K\!f(\mathbb{G})=n\left|\frac{a_2}{a_1}\right|
                    \end{align}
    \cite{gao2012kirchhoff, wang2013laplacian, yang2008kirchhoff}.

    The Kirchhoff index has been extensively studied and applied to diverse network structures.
    Among the numerous contributions in the literature, we briefly mention several works relevant to the present study, including
    silicate networks by Sadar et al. \cite{sardar2020computation},
    linear hexagonal chains by Yang et al. \cite{yang2008kirchhoff},
    polycyclic chains by Liu et al. \cite{liu2024extremal},
    generalized phenylenes by Zhu et al. \cite{zhu2019normalized},
    and $K_n$-chain network by Sun et al. \cite{sun2024resistance}.
    In this article, we introduce and investigate a sequence of nested geometric graphs with weighted multiple edges, as illustrated in Figures \ref{Fig_nested_eye} and \ref{Fig_Nested_withR}, which have not been thoroughly explored.
    To study the variation of the Kirchhoff index of the sequence, we let $\mathbb{G}_{n}$ denote the graph at stage $n$ with order $2n$.
                    \begin{figure}[h]
                        \centering
                        \includegraphics[width=11cm]{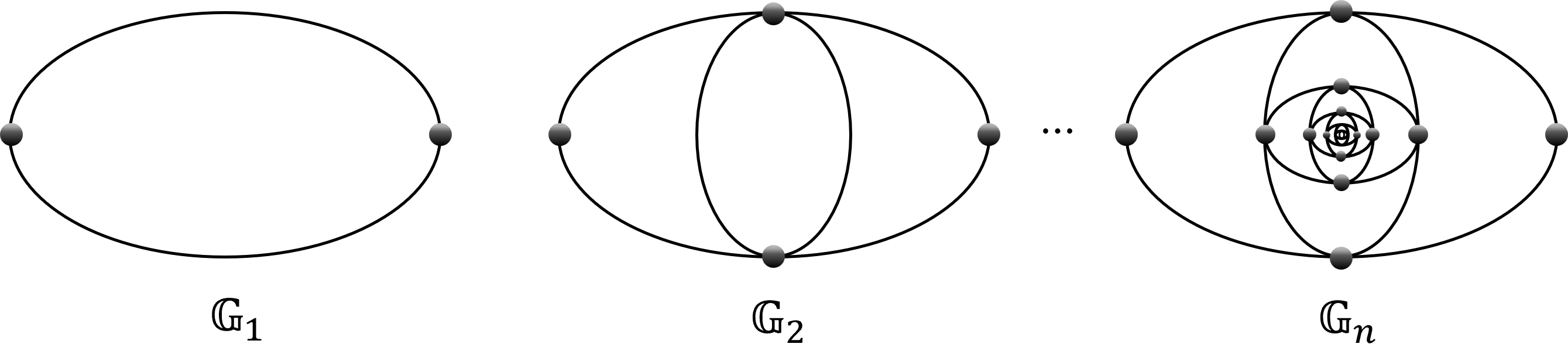}
                         \caption{\small Sequence of nested geometric graphs} \label{Fig_nested_eye}
                    \end{figure}
    In Section 2, we obtain the recurrence formula for the determinant of the block tridiagonal matrix used in computing $K\!f(\mathbb{G}_n)$.
    Section 3 derives the recurrence relation for the characteristic polynomials of $\mathbb{G}_{n}$ and obtains the exact expression of $K\!f(\mathbb{G}_n)$, from which the asymptotic behavior of $K\!f(\mathbb{G}_n)$ is determined.
    Section 4 is devoted to computing the Kirchhoff index of the 4-regular graph generated from $\mathbb{G}_n$.
    While most terms are defined in the text, those used throughout the remainder of this paper are summarized in Table \ref{notation}.
                        \begin{table}[]
                                \centering
                                \begingroup
                                \renewcommand*{\arraystretch}{1.2}
                                \caption{List of Notations} \label{notation}
                                \footnotesize
                                \setlength\tabcolsep{4pt}
                                    \begin{tabular}{l p{6.5cm}}
                                    \toprule
                                    Symbol & Description\\
                                    \midrule
                                    $\mathbf{R}^n$ & $n$-dimensional real column vectors\\
                                    $\mathbf{R}^{m\times n}$ & $m$ by $n$ real matrices\\
                                    $\mathbf{0}_n$ & vector of all zeros $\in \mathbf{R}^{n}$\\
                                    $\mathbf{1}_n$ & vector of all ones $\in \mathbf{R}^{n}$\\
                                    $J_n$ & matrix of all ones $\in \mathbf{R}^{n\times n}$\\
                                    $J_{m,n}$ & matrix of all ones $\in \mathbf{R}^{m\times n}$\\
                                    $O_n$ & zero matrix $\in \mathbf{R}^{n\times n}$\\
                                    $I_n$ & identity matrix $\in \mathbf{R}^{n\times n}$\\
                                    $A_{(i,j)}$ & $(i, j)$ entry of $A \in \mathbf{R}^{m\times n}$\\
                                    $\psi_{A}(\lambda)$ & characteristic polynomial of $A \in \mathbf{R}^{n\times n}$\\
                                    $det\,A$ & determinant of $A \in \mathbf{R}^{n\times n}$\\
                                    $adj\,A$ & transposed cofactor matrix of $A \in \mathbf{R}^{n\times n}$\\
                                    $A(i|j)$ & matrix obtained from $A\in \mathbf{R}^{m\times n}$ by deleting its $i$-th row and $j$-th column\\
                                    $A(i,j|k,l)$ & matrix obtained from $A\in \mathbf{R}^{m\times n}$ by deleting its $i$, $j$-th rows and $k$, $l$-th columns\\
                                    Leading principal submatrix of $A$ & square submatrix at the left top corner of $A\in \mathbf{R}^{n\times n}$\\
                                    \bottomrule
                                    \end{tabular}
                                \endgroup
                        \end{table}

\section{Determinant of a block tridiagonal matrix}
    \begin{lemma}\cite{horn2012matrix} \label{blockdet}
    Suppose $A$ is a nonsingular square submatrix of a block matrix $M=\begin{bmatrix}\,A & B\, \\ \,C & D\,\end{bmatrix}\in \mathbf{R}^{n\times n}$.
    Then
                \begin{equation}\label{eqdet1}
                        det\,M=det\,A~det\,(D- CA^{-1}B).
                \end{equation}
    In particular, when $D=d\in \mathbf{R}$, $B=\boldsymbol{x}$ and $C=\boldsymbol{y}^T$, where $\boldsymbol{x}, \boldsymbol{y}\in \mathbf{R}^{n-1}$,
                \begin{equation}\label{eqdet2}
                        det\,\begin{bmatrix}
                        \,A & \boldsymbol{x}\, \\
                        \,\boldsymbol{y}^T & d\,
                        \end{bmatrix}=d~det\,A-\boldsymbol{y}^T (adj A)~\boldsymbol{x}.
                \end{equation}
    Moreover,
                \begin{equation}\label{eqdet3}
                    det\,(A+\boldsymbol{x}\boldsymbol{y}^T)=det\,A+\boldsymbol{y}^T\,(adj\, A)~\boldsymbol{x}.
                \end{equation}
    \end{lemma}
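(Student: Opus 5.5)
The plan is to prove the three identities in order, getting the later two from the first. For (\ref{eqdet1}) I would use the block $LU$ factorization, which is legitimate because $A$ is invertible:
\[
\begin{bmatrix} A & B\\ C & D\end{bmatrix}
=\begin{bmatrix} I & O\\ CA^{-1} & I\end{bmatrix}
\begin{bmatrix} A & B\\ O & D-CA^{-1}B\end{bmatrix}.
\]
Multiplying out the blocks confirms this equality directly. The first factor is block lower triangular with identity diagonal blocks, so its determinant is $1$. The second factor is block upper triangular, so its determinant is $det\,A\,det(D-CA^{-1}B)$. The only ingredient is that a block triangular matrix has determinant equal to the product of the determinants of its diagonal blocks. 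This follows from Laplace expansion, or from a further factorization of the form $\begin{bmatrix} A&B\\O&S\end{bmatrix}=\begin{bmatrix} I&O\\O&S\end{bmatrix}\begin{bmatrix} I&B\\O&I\end{bmatrix}\begin{bmatrix} A&O\\O&I\end{bmatrix}$.

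For (\ref{eqdet2}) I would specialize (\ref{eqdet1}) to $D=d$, $B=\boldsymbol{x}$, $C=\boldsymbol{y}^T$. This gives $det\,M=det\,A\,(d-\boldsymbol{y}^TA^{-1}\boldsymbol{x})$. Substituting $A^{-1}=adj A/det\,A$ turns it into $d\,det\,A-\boldsymbol{y}^T(adj A)\boldsymbol{x}$. So far this needs $A$ nonsingular. Both sides, however, are polynomials in the entries of $A$, because the adjugate is polynomial in the entries. Nonsingular matrices are dense, so a continuity argument, or the Zariski-density of $\{det\,A\neq 0\}$, extends the identity to every $A$. This means the $adj$ forms hold without the invertibility hypothesis, which is presumably why the paper states them that way.

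For (\ref{eqdet3}) I would compute the determinant of the bordered matrix
\[
N=\begin{bmatrix} A & -\boldsymbol{x}\\ \boldsymbol{y}^T & 1\end{bmatrix}
\]
in two ways. Eliminating the last row and column as in (\ref{eqdet2}) with $d=1$ gives $det\,N=det\,A+\boldsymbol{y}^T(adj A)\boldsymbol{x}$. The other way is to take the Schur complement with respect to the invertible $1\times1$ block in the lower-right corner. This is the mirror-image factorization of the one used for (\ref{eqdet1}), with the roles of the corners exchanged. It gives $det\,N=1\cdot det(A-(-\boldsymbol{x})(1)^{-1}\boldsymbol{y}^T)=det(A+\boldsymbol{x}\boldsymbol{y}^T)$. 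Equating the two expressions yields (\ref{eqdet3}).

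No step here is a genuine obstacle, since this is a standard textbook fact. The one point that needs care is the passage from $A^{-1}$ to $adj\,A$ for singular $A$, which the density argument handles. A small bookkeeping point is making sure the sign convention in (\ref{eqdet2}) matches the statement. With $B=\boldsymbol{x}$ it comes out as a minus; in (\ref{eqdet3}) the choice $-\boldsymbol{x}$ in $N$ produces the plus.
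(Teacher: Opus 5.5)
Your proof is correct. The paper gives no proof of its own for this lemma; it simply cites Horn and Johnson, and what you wrote is the standard derivation found in that reference: the Schur-complement factorization for (\ref{eqdet1}), its $1\times 1$ specialization with $A^{-1}=adj\,A/\det A$ for (\ref{eqdet2}), and (\ref{eqdet3}) obtained by evaluating $\det\begin{bmatrix} A & -\boldsymbol{x}\\ \boldsymbol{y}^T & 1\end{bmatrix}$ in two ways.

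Keep your polynomial-identity/density remark, because the paper relies on it. The lemma is stated only for nonsingular $A$, yet the proof of Theorem~\ref{thm1} applies (\ref{eqdet3}) with $A=T_{n-1}$ and (\ref{eqdet2}) with $A=T_{n-2}$. Only the blocks $A_i$ are assumed invertible there, so your extension to arbitrary $A$ is what justifies those steps.
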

    \begin{lemma} \label{blockdet2}
    Let $J_{mn}$ be the $m\times n$ matrix of all ones.
    Suppose $A\in \mathbf{R}^{m\times m}$ and $B\in \mathbf{R}^{n\times n}$ are  nonsingular submatrices of a block matrix $M=\begin{bmatrix}\,A & p\,J_{m,n}\, \\ \,q\,J_{n,m} & B\,\end{bmatrix}\in \mathbf{R}^{(m+n)\times (m+n)}$.
    Then
                \[
                det\,M=det\,A~det\,B-p\,q\sum_{\forall(i,j)}(adj\,A)_{(i,j)}\sum_{\forall(i,j)}(adj\, B)_{(i,j)}.
                \]
    \end{lemma}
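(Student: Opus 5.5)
The plan is to reduce everything to the two identities already recorded in Lemma \ref{blockdet}: first the Schur complement formula (\ref{eqdet1}) to peel off the block $A$, and then the rank-one update formula (\ref{eqdet3}) to evaluate the determinant of the resulting Schur complement. The key observation is that, because the off-diagonal blocks are multiples of all-ones matrices, the Schur complement is $B$ plus a rank-one matrix.

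\emph{Step 1 (Schur complement).} Since $A$ is nonsingular, (\ref{eqdet1}) gives
\[
det\,M = det\,A~det\,\bigl(B - q\,J_{n,m}\,A^{-1}\,p\,J_{m,n}\bigr).
\]

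\emph{Step 2 (collapse the correction term).} Write $J_{m,n}=\mathbf{1}_m\mathbf{1}_n^T$ and $J_{n,m}=\mathbf{1}_n\mathbf{1}_m^T$. Then
\[
J_{n,m}A^{-1}J_{m,n}=\mathbf{1}_n\,(\mathbf{1}_m^T A^{-1}\mathbf{1}_m)\,\mathbf{1}_n^T = s\,J_n,
\qquad
s=\mathbf{1}_m^T A^{-1}\mathbf{1}_m=\frac{\sum_{\forall(i,j)}(adj\,A)_{(i,j)}}{det\,A},
\]
using $A^{-1}=adj\,A/det\,A$. Hence the Schur complement equals $B+\boldsymbol{x}\boldsymbol{y}^T$ with $\boldsymbol{x}=-pq\,s\,\mathbf{1}_n$ and $\boldsymbol{y}=\mathbf{1}_n$.

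\emph{Step 3 (rank-one update).} Apply (\ref{eqdet3}):
\[
det\,(B+\boldsymbol{x}\boldsymbol{y}^T)=det\,B-pq\,s\,\mathbf{1}_n^T(adj\,B)\mathbf{1}_n
= det\,B-pq\,s\sum_{\forall(i,j)}(adj\,B)_{(i,j)}.
\]
Multiplying by $det\,A$ cancels the denominator of $s$ and yields the claimed formula.

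I expect no real obstacle. The only point requiring care is Step 2: one must justify that $J_{n,m}A^{-1}J_{m,n}$ is a scalar multiple of $J_n$, and identify the scalar $\mathbf{1}^TA^{-1}\mathbf{1}$ with the sum of the entries of $adj\,A$ divided by $det\,A$. It is also worth noting that (\ref{eqdet3}) does not require $B$ to be invertible, so the hypothesis on $B$ is not actually needed. The result is symmetric in $A$ and $B$, which is consistent with this.
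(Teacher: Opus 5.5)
Your proof is correct and follows the paper's argument essentially step for step. Both apply the Schur complement formula (\ref{eqdet1}), collapse $J_{n,m}A^{-1}J_{m,n}$ to $\frac{1}{det\,A}\sum_{\forall(i,j)}(adj\,A)_{(i,j)}\,\mathbf{1}_n\mathbf{1}_n^T$, and finish with the rank-one formula (\ref{eqdet3}). Your side remark is also right: (\ref{eqdet3}) holds for arbitrary $B$, so the nonsingularity of $B$ is never actually used.
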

    \begin{proof}
                By Lemma \ref{blockdet}(\ref{eqdet1}),
                \begin{align*}
                        det\,M=det\,A~det\,(B-p\,q\,J_{n,m}\,A^{-1}\,J_{m,n}).
                \end{align*}
                Let $\boldsymbol{1}_n=[1, \ldots ,1]^T \in \mathbf{R^{n}}$.
                Since
                \begin{align*}
                J_{n,m}\,A^{-1}\,J_{m,n}
                        &=\boldsymbol{1}_n \,\boldsymbol{1}_m^T\, \frac{adj\,A}{det\,A}\, \boldsymbol{1}_m\,\boldsymbol{1}_n^T\\
                        &=\frac{1}{det\,A}\sum_{\forall(i,j)}(adj\,A)_{(i,j)}\,\boldsymbol{1}_n\,\boldsymbol{1}_n^T,
                \end{align*}
    by Lemma \ref{blockdet}(\ref{eqdet3}) we have
                \begin{align*}
                        det\,M
                        &=det\,A\,det\bigg(B-\frac{p\,q}{det\,A}\sum_{\forall(i,j)}(adj\,A)_{(i,j)}\,\boldsymbol{1}_n\,\boldsymbol{1}_n^T\bigg)\\
                        &=det\,A\,\bigg(det\,B-\frac{p\,q}{det\,A}\sum_{\forall(i,j)}(adj\,A)_{(i,j)}\,\boldsymbol{1}_n^T\,(adj\,B)\,\boldsymbol{1}_n\bigg)\\
                        &=det\,A\,det\,B-p\,q\sum_{\forall(i,j)}(adj\,A)_{(i,j)}\sum_{\forall(i,j)}(adj\,B)_{(i,j)}.
                \end{align*}
    \end{proof}

    We define a block tridiagonal matrix $T_n\in \mathbf{R}^{2n\times 2n}$ by
                \begingroup
                        \renewcommand*{\arraystretch}{1.5}
                        \[
                        T_n:=\begin{bmatrix}
                        \,A_1 & p_1J_2& O_2 & \cdots & O_2\,\\
                        \,q_1J_2 & A_2 & p_2J_2 & & \vdots \\
                        \,O_2& q_2J_2 & A_3 & \ddots & O_2\, \\
                        \vdots& & \ddots & \ddots & p_{n-1} J_2\,\\
                        \,O_2 &\cdots &O_2 & q_{n-1}J_2 & A_n\,
                        \end{bmatrix},
                        \]
                \endgroup
    where
                $A_i=\begin{bmatrix}  a_i&b_i\\  c_i&d_i    \end{bmatrix}$,
                $J_2=\begin{bmatrix}     \,1&1\,\\   \,1&1\,    \end{bmatrix}$,
                $O_2=\begin{bmatrix}     \,0&0\,\\    \,0&0\,    \end{bmatrix}$, and $a_i, b_i, c_i, d_i, p_i, q_i \in \mathbf{R}$.
    \begin{theorem}\label{thm1}
    Suppose that each $A_i$ $(i=1, ... , n)$ in $T_n$ is nonsingular.
    Let $\boldsymbol{g_n}=det\,T_n$.
    Then the sequence $\boldsymbol{g_n}$ satisfies the following recurrence relation:
                \[
                        \boldsymbol{g_n}=det\,A_n~\boldsymbol{g_{n-1}}
                        -p_{n-1}\,q_{n-1}\,t_n\,t_{n-1}\,\boldsymbol{g_{n-2}} \qquad (n\geq 3),\\
                \]
    where $t_i=a_i+d_i-b_i-c_i$, with the initial values
                \begin{align*}
                        &\boldsymbol{g_1}=a_1d_1-b_1c_1,\\
                        &\boldsymbol{g_2}=det\,A_1~det\,A_2-p_1\,q_1\,t_1\,t_2.
                \end{align*}
    \end{theorem}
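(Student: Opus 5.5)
The plan is to peel off the last diagonal block $A_n$ with Lemma~\ref{blockdet}(\ref{eqdet1}), exploiting that every off-diagonal block is rank one. Let $\boldsymbol{w}\in\mathbf{R}^{2n-2}$ have ones in its last two entries and zeros elsewhere. Then $T_n=\begin{bmatrix} T_{n-1} & p_{n-1}\boldsymbol{w}\boldsymbol{1}_2^T\\ q_{n-1}\boldsymbol{1}_2\boldsymbol{w}^T & A_n\end{bmatrix}$.

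\textbf{Step 1: Schur complement on $A_n$.} Since $A_n$ is nonsingular, the Schur complement formula with $A_n$ as the pivot block (the same argument as (\ref{eqdet1}), with the roles of the blocks permuted) gives
\[
g_n=\det A_n\,\det\Big(T_{n-1}-\tfrac{p_{n-1}q_{n-1}}{\det A_n}\,\boldsymbol{w}\,\boldsymbol{1}_2^T(\mathrm{adj}\,A_n)\boldsymbol{1}_2\,\boldsymbol{w}^T\Big).
\]
For a $2\times 2$ matrix, $\mathrm{adj}\,A_i=\begin{bmatrix} d_i&-b_i\\-c_i&a_i\end{bmatrix}$, so $\boldsymbol{1}_2^T(\mathrm{adj}\,A_i)\boldsymbol{1}_2=t_i$. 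This is the source of the factors $t_i$, and it already yields $g_2$ (equivalently, $g_2$ is Lemma~\ref{blockdet2} with $m=n=2$). Applying (\ref{eqdet3}) then gives
\[
g_n=\det A_n\, g_{n-1}-p_{n-1}q_{n-1}t_n\,\boldsymbol{w}^T(\mathrm{adj}\,T_{n-1})\boldsymbol{w}.
\]

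\textbf{Step 2: the key identity.} It remains to prove
\[
\boldsymbol{w}^T(\mathrm{adj}\,T_{n-1})\boldsymbol{w}=t_{n-1}\,g_{n-2},
\]
which I expect to be the only real obstacle. I would avoid computing the adjugate directly. By (\ref{eqdet3}), the quantity $\boldsymbol{w}^T(\mathrm{adj}\,T_{n-1})\boldsymbol{w}$ is the coefficient of $s$ in the affine function $s\mapsto\det(T_{n-1}+s\boldsymbol{w}\boldsymbol{w}^T)$. The perturbation only replaces $A_{n-1}$ by $A_{n-1}+sJ_2$.

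In this matrix, subtract column $2n-3$ from column $2n-2$, then subtract row $2n-3$ from row $2n-2$; neither operation changes the determinant. The rows of $q_{n-2}J_2$ are equal, as are the columns of $p_{n-2}J_2$, and the same holds for $sJ_2$. Hence after these operations the last row and last column vanish outside the final $2\times2$ block, and $s$ survives only in the diagonal entry $(2n-3,2n-3)$. Concretely, that block becomes $\begin{bmatrix} a_{n-1}+s & b_{n-1}-a_{n-1}\\ c_{n-1}-a_{n-1} & t_{n-1}\end{bmatrix}$.

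Expanding along that entry, the coefficient of $s$ is the principal minor obtained by deleting row and column $2n-3$. In that minor the last row and column are zero except for the diagonal entry $t_{n-1}$, and the remaining leading block is exactly $T_{n-2}$. So the minor equals $t_{n-1}g_{n-2}$, as required.

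\textbf{Step 3: assembly.} Substituting the identity into Step~1 gives the stated recurrence for $n\ge3$. Here $T_{n-2}$ is nonempty, so $g_{n-2}$ makes sense. The initial values are $g_1=\det A_1$ and $g_2$ from Step~1. The nonsingularity hypothesis is used only for $A_n$, to justify the Schur complement; the identity in Step~2 is polynomial and needs no invertibility.
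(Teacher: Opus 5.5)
Your proof is correct. Step 1 is the same as the paper's: Schur complement on $A_n$, the observation $J_2A_n^{-1}J_2=(t_n/\det A_n)J_2$, and (\ref{eqdet3}), leading to $g_n=\det A_n\,g_{n-1}-p_{n-1}q_{n-1}t_n\,\boldsymbol{w}^T(\mathrm{adj}\,T_{n-1})\boldsymbol{w}$.

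The two routes differ in the key identity $\boldsymbol{w}^T(\mathrm{adj}\,T_{n-1})\boldsymbol{w}=t_{n-1}g_{n-2}$.

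\textbf{The paper's route.} It writes the left side as the four cofactors
\[
\det T_{n-1}(2n-2|2n-2)+\det T_{n-1}(2n-3|2n-3)-\det T_{n-1}(2n-2|2n-3)-\det T_{n-1}(2n-3|2n-2).
\]
Each is $T_{n-2}$ bordered by the same vectors $\boldsymbol{p},\boldsymbol{q}$, because the columns of $p_{n-2}J_2$ coincide and so do the rows of $q_{n-2}J_2$. The corner entries are $a_{n-1},d_{n-1},b_{n-1},c_{n-1}$ respectively. Applying (\ref{eqdet2}) to each, the unknown cross term $\boldsymbol{q}^T(\mathrm{adj}\,T_{n-2})\boldsymbol{p}$ cancels because of the $+,+,-,-$ sign pattern.

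\textbf{Your route.} You read the quadratic form as the $s$-coefficient of $\det(T_{n-1}+s\boldsymbol{w}\boldsymbol{w}^T)$. Your row and column operations then decouple the last coordinate. As a result the cross term never appears, no off-diagonal cofactor signs need tracking, and (\ref{eqdet2}) is not needed. The factor $t_{n-1}=(-1,1)\,A_{n-1}\,(-1,1)^T$ appears directly as a diagonal entry.

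\textbf{Comparison.} The paper's argument is a mechanical but sign-sensitive cofactor computation. Yours is shorter and shows structurally why $t_{n-1}$ appears. Both proofs apply (\ref{eqdet3}) to the possibly singular $T_{n-1}$; as you note, this is legitimate because the adjugate form is a polynomial identity.
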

    \begin{proof}
                $\boldsymbol{g_1}=det\,T_1=det\,A_1=a_1d_1-b_1c_1$ is trivial.
                $\boldsymbol{g_2}$ can be obtained by Lemma \ref{blockdet2} as
                \begin{align*}
                \boldsymbol{g_2}&=det\,T_2
                        =\begin{bmatrix} \,A_1&p_1J_2\,\\ \,q_1 J_2&A_2\, \end{bmatrix}\\
                        &=det\,A_1\,det\,A_2-p_1\,q_1\,(a_1+d_1-b_1-c_1)\,(a_2+d_2-b_2-c_2)\\
                        &=det\,A_1\,det\,A_2-p_1\,q_1\,t_1\,t_2.
                \end{align*}
    Next, applying \ref{blockdet}(\ref{eqdet1}) to $T_n=\begin{bmatrix} \,T_{n-1}&[\,O_2 \cdots ~ p_{n-1}J_2\,]^T \,\\ \,[\,O_2 \cdots ~ q_{n-1}J_2\,]&A_n\, \end{bmatrix}$ gives
                \[
                    \boldsymbol{g_n}=det\,T_n=det\,A_n\,det\,(T_{n-1}~-~p_{n-1}\,q_{n-1}[\,O_2 \cdots ~ J_2\,]^T\,A_n^{-1}\,[\,O_2 \cdots ~ J_2\,]).
                \]
                \vskip 3pt
    \noindent Note that $[\,O_2 \cdots ~ p_{n-1}J_2\,]^T \in \mathbf{R}^{(2n-2)\times 2}$ and $[\,O_2 \cdots ~ q_{n-1}J_2\,] \in \mathbf{R}^{2\times (2n-2)}$.
    Let $\boldsymbol{0}_n=[0, ... ,0]^T \in \mathbf{R^{n}}$.
    Since
                \begin{align*}
                        &[\,O_2 \cdots ~ J_2\,]^T\,A_n^{-1}\,[\,O_2 \cdots ~ J_2\,]\\
                        &=\begin{bmatrix}\,O_2 & \cdots &O_2\,\\ \vdots & \ddots & \vdots\\\,O_2 & \cdots & J_2\,A_n^{-1}\,J_2  \,\end{bmatrix}
                        =\frac{(a_{n}+d_{n}-b_{n}-c_{n})}{det\,A_n}
                        \begingroup
                        \renewcommand*{\arraystretch}{1.2}
                            \begin{bmatrix}\,\boldsymbol{0}_{2n-4}\,\\~\boldsymbol{1}_2\,\end{bmatrix}
                        \endgroup
                        [\,\boldsymbol{0}_{2n-4}^{\,T}~~\boldsymbol{1}_2^T\,],
                \end{align*}
    we have by Lemma \ref{blockdet}(\ref{eqdet3}) that
                \begin{align*}
                    \boldsymbol{g_n}&=det\,A_n\,det\,\left(T_{n-1}-p_{n-1}\,q_{n-1}
                    \frac{t_n}{det\,A_n}
                        \begingroup
                        \renewcommand*{\arraystretch}{1.2}
                            \begin{bmatrix}\,\boldsymbol{0}_{2n-4}\,\\~\boldsymbol{1}_2\,\end{bmatrix}
                        \endgroup
                        [\,\boldsymbol{0}_{2n-4}^{\,T}~~\boldsymbol{1}_2^T\,]\right)\\
                    &=det\,A_n\,det\,T_{n-1}-p_{n-1}\,q_{n-1}\,t_n
                        [\,\boldsymbol{0}_{2n-4}^{\,T}~~\boldsymbol{1}_2^T\,](adj\,T_{n-1})
                        \begingroup
                        \renewcommand*{\arraystretch}{1.2}
                            \begin{bmatrix}\,\boldsymbol{0}_{2n-4}\,\\~\boldsymbol{1}_2\,\end{bmatrix}
                        \endgroup.
                \end{align*}
    Let $\boldsymbol{p}=[\boldsymbol{0}_{2n-6},p_{n-2},p_{n-2}]^T \in \mathbf{R^{2n-4}}$ and $\boldsymbol{q}=[\boldsymbol{0}_{2n-6},q_{n-2},q_{n-2}]^T \in \mathbf{R^{2n-4}}$.
    Then
                \begin{align*}
                        &[\,\boldsymbol{0}_{2n-4}^{\,T}~~\boldsymbol{1}_2^T\,](adj\,T_{n-1})
                        \begingroup
                        \renewcommand*{\arraystretch}{1.2}
                            \begin{bmatrix}\,\boldsymbol{0}_{2n-4}\,\\~\boldsymbol{1}_2\,\end{bmatrix}
                        \endgroup \\
                        &=det\,T_{n-1}(2n-2|2n-2)+det\,T_{n-1}(2n-3|2n-3)\\
                        &\quad -det\,T_{n-1}(2n-2|2n-3)-det\,T_{n-1}(2n-3|2n-2)\\
                        &=det\begin{bmatrix}\,T_{n-2} & \boldsymbol{p} \\ \boldsymbol{q}^T &a_{n-1}\end{bmatrix}
                            +det\begin{bmatrix}\,T_{n-2} & \boldsymbol{p} \\ \boldsymbol{q}^T &d_{n-1}\end{bmatrix}
                            -det\begin{bmatrix}\,T_{n-2} & \boldsymbol{p} \\ \boldsymbol{q}^T &b_{n-1}\end{bmatrix}
                            -det\begin{bmatrix}\,T_{n-2} & \boldsymbol{p} \\ \boldsymbol{q}^T &c_{n-1}\end{bmatrix}\\
                        &=(a_{n-1}det\,T_{n-2}-\boldsymbol{q}^T adj\,T_{n-2}\,\boldsymbol{p})
                            +(d_{n-1}det\,T_{n-2}-\boldsymbol{q}^T adj\,T_{n-2}\,\boldsymbol{p})\\
                        &\quad -(b_{n-1}det\,T_{n-2}-\boldsymbol{q}^T adj\,T_{n-2}\,\boldsymbol{p})
                            -(c_{n-1}det\,T_{n-2}-\boldsymbol{q}^T adj\,T_{n-2}\,\boldsymbol{p})\\
                        &=(a_{n-1}+d_{n-1}-b_{n-1}-c_{n-1})\,det\,T_{n-2}
                \end{align*}
    by applying Lemma \ref{blockdet}(\ref{eqdet2}).
    Finally,
                \begin{align*}
                        \boldsymbol{g_n}&=det\,A_n\,det\,T_{n-1}-p_{n-1}\,q_{n-1}\,t_n
                        (a_{n-1}+d_{n-1}-b_{n-1}-c_{n-1})\,det\,T_{n-2}\\
                        &=det\,A_n\,\boldsymbol{g_{n-1}}-p_{n-1}\,q_{n-1}\,t_n\,
                        t_{n-1}\,\boldsymbol{g_{n-2}}.
                \end{align*}
    \end{proof}

    When the submatrix $A_i$ of $T_n$ is in the form of
    $A_i=\begin{bmatrix}  a_i&b_i\\  b_i&a_i    \end{bmatrix}$,
    the recurrence relation for the determinant $T_n$ can be obtained more simply using the following two lemmas.
    \begin{lemma}\cite{muir2003treatise}\label{lem_tridiagdet}
    Let $\boldsymbol{h_n}$ denote the determinant of the $n \times n$ tridiagonal matrix defined by
                \[
                        \boldsymbol{h_n}:=det\begin{bmatrix}
                        \,a_1 & p_1& 0 & \cdots & 0\,\\
                        \,q_1 & a_2 & p_2 & & \vdots\\
                        0 & q_2 & a_3 & \ddots & 0\\
                        \vdots & & \ddots & \ddots & p_{n-1} \,\\
                        \,0 & \cdots & 0 & q_{n-1} & a_n\,
                        \end{bmatrix}.
                \]
    Then $\boldsymbol{h_n}$ satisfies the following recurrence relation
                \[
                \boldsymbol{h_n}=a_n \boldsymbol{h_{n-1}}-p_{n-1}q_{n-1}\boldsymbol{h_{n-2}} \qquad (n\geq 2)
                \]
    with the initial values $\boldsymbol{h_1}=a_1$ and $\boldsymbol{h_0}=1$.
    \end{lemma}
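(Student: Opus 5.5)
The plan is to expand $\boldsymbol{h_n}$ by cofactors along its last row, which contains only two nonzero entries: $q_{n-1}$ in column $n-1$ and $a_n$ in column $n$.

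\emph{The $a_n$ term.} Deleting the last row and column leaves exactly the leading principal $(n-1)\times(n-1)$ submatrix. Its determinant is $\boldsymbol{h_{n-1}}$, so this term contributes $a_n\boldsymbol{h_{n-1}}$.

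\emph{The $q_{n-1}$ term.} Deleting row $n$ and column $n-1$ leaves a matrix whose last column is $[0,\dots,0,p_{n-1}]^T$. Here $p_{n-1}$ sits in position $(n-1,n-1)$ of the minor, since the original column $n$ has nonzero entries only in rows $n-1$ and $n$. Expanding this minor along its last column gives $p_{n-1}$ times the leading principal $(n-2)\times(n-2)$ submatrix, whose determinant is $\boldsymbol{h_{n-2}}$.

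\emph{Signs.} The cofactor sign at $(n,n-1)$ is $(-1)^{2n-1}=-1$. The sign at $(n-1,n-1)$ in the minor is $+1$. So this term contributes $-p_{n-1}q_{n-1}\boldsymbol{h_{n-2}}$, and adding the two terms gives the stated recurrence.

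\emph{Initial values.} $\boldsymbol{h_1}=a_1$ holds trivially. Setting $\boldsymbol{h_0}=1$ makes the case $n=2$ agree with the direct computation $a_1a_2-p_1q_1$. This is the empty-determinant convention.

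Alternatively, the recurrence is the scalar analogue of Theorem \ref{thm1}. It can be obtained by applying Lemma \ref{blockdet}(\ref{eqdet2}) with $A$ the leading $(n-1)\times(n-1)$ block, $\boldsymbol{x}=p_{n-1}\boldsymbol{u}_{n-1}$, $\boldsymbol{y}=q_{n-1}\boldsymbol{u}_{n-1}$ and $d=a_n$. Then $\boldsymbol{y}^T(adj\,A)\boldsymbol{x}=p_{n-1}q_{n-1}\boldsymbol{h_{n-2}}$, since the $(n-1,n-1)$ cofactor of $A$ is the leading principal minor of order $n-2$. This route needs no invertibility hypothesis, because (\ref{eqdet2}) in adjugate form is a polynomial identity.

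The only real obstacle is tracking the signs and justifying the base case $n=2$ through the convention $\boldsymbol{h_0}=1$. Both are routine.
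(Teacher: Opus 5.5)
Your proof is correct. The last-row cofactor expansion handles every step properly: the identification of the $(n,n-1)$ minor as $p_{n-1}h_{n-2}$ (with $p_{n-1}$ alone in the last column of that minor), the sign $(-1)^{2n-1}=-1$, and the empty-determinant convention $h_0=1$ for the case $n=2$.

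The paper gives no proof of this lemma; it simply cites Muir's treatise, so there is no argument in the paper to compare against. Your expansion is the classical argument behind that citation. Your alternative via Lemma \ref{blockdet}(\ref{eqdet2}) is the scalar version of the technique the paper itself uses to prove Theorem \ref{thm1}. You are also right that the adjugate form is a polynomial identity, so the nonsingularity hypothesis stated in Lemma \ref{blockdet} is not needed there.
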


    For the next lemma and the following text, we denote by
                $\psi_M (\lambda)=det(\lambda I -M)$
    the characteristic polynomial of a square matrix $M$.
    Yang and yu \cite{yang1985graph} considered the unitary transformation $UMU^T$ using a unitary matrix
                $U=\frac{1}{\sqrt{2}}\begin{bmatrix}\,I_n & I_n\, \\ \,I_n & -I_n\,\end{bmatrix}$
    to obtain the following results.
    \begin{lemma} \cite{yang1985graph} \label{lem_yang}
    Suppose $A$ and $B$ are square submarices of a block matrix $M=\begin{bmatrix}\,A & B\, \\ \,B & A\,\end{bmatrix}\in \mathbf{R}^{2n\times 2n}$.
    Then
                \begin{align*}
                        &det (M)=det\,(A+B) \,det\,(A-B)\\
                        &\psi_M (\lambda)=\psi_{A+B}(\lambda) \, \psi_{A-B}(\lambda).
                \end{align*}
    \end{lemma}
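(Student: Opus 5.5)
The plan is to conjugate $M$ by the orthogonal matrix $U=\frac{1}{\sqrt{2}}\begin{bmatrix}\,I_n & I_n\,\\ \,I_n & -I_n\,\end{bmatrix}$ and show that the result is block diagonal. First I will check that $U$ is symmetric and that $U^2=I_{2n}$, so that $U^T=U=U^{-1}$ and $det\,U\,det\,U^T=1$. This is immediate from the block product
\[
\tfrac12\begin{bmatrix}\,I_n+I_n & I_n-I_n\,\\ \,I_n-I_n & I_n+I_n\,\end{bmatrix}=I_{2n}.
\]

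Next I compute $UMU^T$ blockwise. The first product is
\[
UM=\tfrac{1}{\sqrt2}\begin{bmatrix}\,A+B & B+A\,\\ \,A-B & B-A\,\end{bmatrix}.
\]
Multiplying on the right by $U^T=U$ then gives
\[
UMU^T=\tfrac12\begin{bmatrix}\,2(A+B) & O\,\\ \,O & 2(A-B)\,\end{bmatrix}=\begin{bmatrix}\,A+B & O_n\,\\ \,O_n & A-B\,\end{bmatrix}.
\]
Since $M$ and $UMU^T$ are similar, they have the same determinant and the same characteristic polynomial. The determinant of a block diagonal matrix is the product of the determinants of its diagonal blocks. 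This is Lemma \ref{blockdet}(\ref{eqdet1}) with off-diagonal blocks equal to zero, or it follows directly from the Laplace expansion if $A+B$ happens to be singular. That gives $det\,M=det(A+B)\,det(A-B)$.

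For the characteristic polynomial, note that $U(\lambda I_{2n}-M)U^T=\lambda I_{2n}-UMU^T$ because $UU^T=I$. This matrix is block diagonal with blocks $\lambda I_n-(A+B)$ and $\lambda I_n-(A-B)$. Applying the determinant identity just proved then yields $\psi_M(\lambda)=\psi_{A+B}(\lambda)\,\psi_{A-B}(\lambda)$.

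There is no real obstacle here. The only point needing care is that the block-diagonal determinant formula must not rely on invertibility of $A+B$. To handle this, I will use the Laplace/permutation expansion, in which every permutation mixing the two index halves picks up a zero entry, rather than Lemma \ref{blockdet}.
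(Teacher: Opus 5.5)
Your proposal is correct and takes the same route as the paper, which does not prove the lemma itself but cites Yang and Yu's conjugation $UMU^T$ by $U=\frac{1}{\sqrt{2}}\begin{bmatrix} I_n & I_n\\ I_n & -I_n\end{bmatrix}$, yielding the block diagonal matrix with blocks $A+B$ and $A-B$. Your block computation checks out, and you rightly avoid Lemma \ref{blockdet}, which would need $A+B$ nonsingular, by using the Laplace expansion for the block-diagonal determinant.
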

    \begin{theorem} \label{thm_sym}
    Suppose each $A_i$ $(i=1, ... , n)$ in Theorem \ref{thm1} is of the form
    $A_i=\begin{bmatrix}  a_i&b_i\\  b_i&a_i    \end{bmatrix}$.
    Then
                \[
                \boldsymbol{g_n}=\boldsymbol{\tilde{h}_n}\,\prod_{i=1}^{n}(a_i-b_i),
                \]
    where $\boldsymbol{\tilde{h}_n}$ satisfies the recurrence relation
                \[
                        \boldsymbol{\tilde{h}_n}=(a_n+b_n)\,\boldsymbol{\tilde{h}_{n-1}}-4\,p_{n-1}\,q_{n-1}\boldsymbol{\tilde{h}_{n-2}} \qquad (n\geq 2),
                \]
    with the initial values $\boldsymbol{\tilde{h}_1}=a_1+b_1$ and $\boldsymbol{\tilde{h}_0}=1$.
    Here, $\boldsymbol{\tilde{h}_n}$ represents
                \[
                        \boldsymbol{\tilde{h}_n}=det
                         \begin{bmatrix}
                                        \,a_1+b_1 & 2\,p_1& 0 & \cdots & 0\,\\
                                        \,2\,q_1 & a_2+b_2 & 2\,p_2 & & \vdots\\
                                        0 & 2\,q_2 & a_3+b_3 & \ddots & 0\\
                                        \vdots & & \ddots & \ddots & 2\,p_{n-1} \,\\
                                        \,0 & \cdots \textcolor{white}{\vdots}& 0 & 2\,q_{n-1} & a_n+b_n\,
                        \end{bmatrix}.
                \]
    \end{theorem}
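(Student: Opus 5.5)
We will reduce $T_n$ to the block form of Lemma \ref{lem_yang} by a permutation similarity, and then recognize the two factors. Let $P$ be the permutation matrix that reorders the coordinates $1,2,\dots,2n$ as $(1,3,5,\dots,2n-1,\,2,4,\dots,2n)$, i.e., first coordinates of each $2\times 2$ block first, then second coordinates. Since $P$ is orthogonal, $\det T_n=\det(PT_nP^T)$. Because each $A_i$ is symmetric with equal diagonal entries and each coupling block $p_iJ_2$, $q_iJ_2$ has all entries equal, the permuted matrix has the form
\[
PT_nP^T=\begin{bmatrix}\,A & B\,\\ \,B & A\,\end{bmatrix},
\]
where $A$ is the $n\times n$ tridiagonal matrix with diagonal $a_1,\dots,a_n$, superdiagonal $p_1,\dots,p_{n-1}$ and subdiagonal $q_1,\dots,q_{n-1}$, and $B$ is tridiagonal with diagonal $b_1,\dots,b_n$ and the same off-diagonals $p_i$, $q_i$. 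To verify this, note that the $(2i-1,2j-1)$, $(2i,2j)$, $(2i-1,2j)$, $(2i,2j-1)$ entries of $T_n$ form the block in position $(i,j)$: this block is $A_i$ when $i=j$, $p_iJ_2$ when $j=i+1$, $q_jJ_2$ when $i=j+1$, and zero otherwise. The first two index types populate the diagonal blocks $A$ and the last two populate the off-diagonal blocks $B$.

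Applying Lemma \ref{lem_yang} then gives $\boldsymbol{g_n}=\det(A+B)\,\det(A-B)$. The matrix $A-B$ is diagonal with entries $a_i-b_i$, because the off-diagonal entries cancel. Hence $\det(A-B)=\prod_{i=1}^n(a_i-b_i)$. The matrix $A+B$ is tridiagonal with diagonal $a_i+b_i$, superdiagonal $2p_i$ and subdiagonal $2q_i$, which is exactly the matrix whose determinant defines $\boldsymbol{\tilde h_n}$. Lemma \ref{lem_tridiagdet}, applied with $a_i\mapsto a_i+b_i$, $p_i\mapsto 2p_i$, $q_i\mapsto 2q_i$, then yields the recurrence $\boldsymbol{\tilde h_n}=(a_n+b_n)\boldsymbol{\tilde h_{n-1}}-4p_{n-1}q_{n-1}\boldsymbol{\tilde h_{n-2}}$ with $\boldsymbol{\tilde h_1}=a_1+b_1$ and $\boldsymbol{\tilde h_0}=1$.

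The main point needing care is the bookkeeping in the permutation step, namely checking that the off-diagonal coupling blocks contribute identically to both $A$ and $B$. This is precisely where the all-ones structure of $J_2$ is used. Note also that the nonsingularity hypothesis on the $A_i$ inherited from Theorem \ref{thm1} is not needed in this argument.
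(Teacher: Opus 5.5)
Your proposal is correct and follows essentially the same route as the paper. The paper likewise uses the permutation $P$ ordering coordinates as $(1,3,\dots,2n-1,2,4,\dots,2n)$ to write $\det T_n=\det\begin{bmatrix} A & B\\ B & A\end{bmatrix}$, then factors via Lemma~\ref{lem_yang} and gets the recurrence for $\det(A+B)$ from Lemma~\ref{lem_tridiagdet}. Your entrywise check of the permuted blocks and your remark that nonsingularity of the $A_i$ is not needed are accurate.
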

    \begin{proof}
    Consider a $2n \times 2n$ permutation matrix
                \[ P=
                        \small
                        \left[
                                \begin{array}
                                        {@{\hspace{4pt}}c@{\hspace{8pt}}c
                                          @{\hspace{6pt}}c@{\hspace{6pt}}c
                                          @{\hspace{6pt}}c@{\hspace{6pt}}c
                                          @{\hspace{6pt}}c@{\hspace{6pt}}c
                                          @{\hspace{6pt}}c@{\hspace{8pt}}c}
                                1 & 0 & \cdots &  &  &  &  &  &  & 0\\
                                0 & 0 & 1 & 0 & \cdots &  &  &  &  & 0\\
                                0 & 0 & 0 & 0 & 1 & 0 & \cdots &  &  & 0\\
                                  &  &  &  & \ddots &  &  &  &  & \\
                                0 & \cdots &  & \cdots &  & \cdots &  & 0 & 1 & 0\\
                                0 & 1 & 0 & \cdots &  &  &  &  &  & 0\\
                                0 & 0 & 0 & 1 & 0 & \cdots &  &  & & 0\\
                                0 & 0 & 0 & 0 & 0 & 1 & 0 & \cdots &  & 0\\
                                 &  &  &  & \ddots &  &  &  &  & \\
                                0 & \cdots &  & \cdots &  & \cdots &  & \cdots & 0 & 1\\
                                \end{array}
                        \right],
                \]
    which reorders the rows and columns of $T_{n}$ via $P\,T_n\,P^T$ so that the original indices $(1, 2, ... , 2n)$ are rearranged into $(1, 3, 5, ... , 2n-1, 2, 4, 6, ... , 2n)$.
    Then
                \[
                det\,T_n=det\,P\,T_n\,P^T=det\,\begin{bmatrix}\,A & B\, \\ \,B & A\,\end{bmatrix},
                \]
    where
                \[
                A=
                        \begin{bmatrix}
                                        a_1 & p_1& 0 & \cdots & 0\,\\
                                        q_1 & a_2 & p_2 & & \vdots\\
                                        0 & q_2 & a_3 & \ddots & 0\\
                                        \vdots & & \ddots & \ddots & p_{n-1} \,\\
                                        0 & \cdots & 0 & q_{n-1} & a_n\,
                        \end{bmatrix}
                \quad\text{and}\quad
                B=
                        \begin{bmatrix}
                                        b_1 & p_1 & 0 & \cdots & 0\,\\
                                        q_1 & b_2 & p_2 & & \vdots\\
                                        0  & q_2 & b_3 & \ddots & 0\\
                                        \vdots & & \ddots & \ddots & p_{n-1} \,\\
                                        0 & \cdots &  & q_{n-1} & b_n\,
                        \end{bmatrix}.
                \]
    Therefore, Lemma $\ref{lem_yang}$ yields
                \[
                \boldsymbol{g_n}=det\,T_n=det(A+B)\,det(A-B)=\boldsymbol{\tilde{h}_n}\,\prod_{i=1}^{n}(a_i-b_i).
                \]
    Moreover, Lemma $\ref{lem_tridiagdet}$ ensures that $\boldsymbol{\tilde{h}_n}$ satisfies the recurrence relation stated in Theorem $\ref{thm_sym}$.
    \end{proof}

\section{Kirchhoff index of $\mathbb{G}_n$}
    The sequence of graphs $\mathbb{G}_{n}$ is defined recursively, starting from $\mathbb{G}_{1}$ as illustrated in Figure \ref{Fig_Nested_withR}.
    The recursive rule for generating $\mathbb{G}_{n+1}$ from $\mathbb{G}_{n}$ is given as follows:
    \begin{quote}
        \textbf{Step 1)} Two vertices $v_{n+1}$ and $v_{n+1}'$ are added to $\mathbb{G}_{n}$, placing one vertex at the midpoint of each of the two innermost edges.\\
        \textbf{Step 2)} Two multiple edges $(v_{n+1}, v_{n+1}')$ are then connected to these new vertices.\\
        \textbf{Step 3)} The resistance (weight) of the newly created edge is half the resistance of the edge it is connected to.\\
        \textbf{Step 4)} The innermost edge of $\mathbb{G}_{n}$ is divided into two edges due to the added vertex, and each of them is assigned half of the original resistance.
    \end{quote}
                    \begin{figure}[h]
                        \centering
                        \includegraphics[width=12cm]{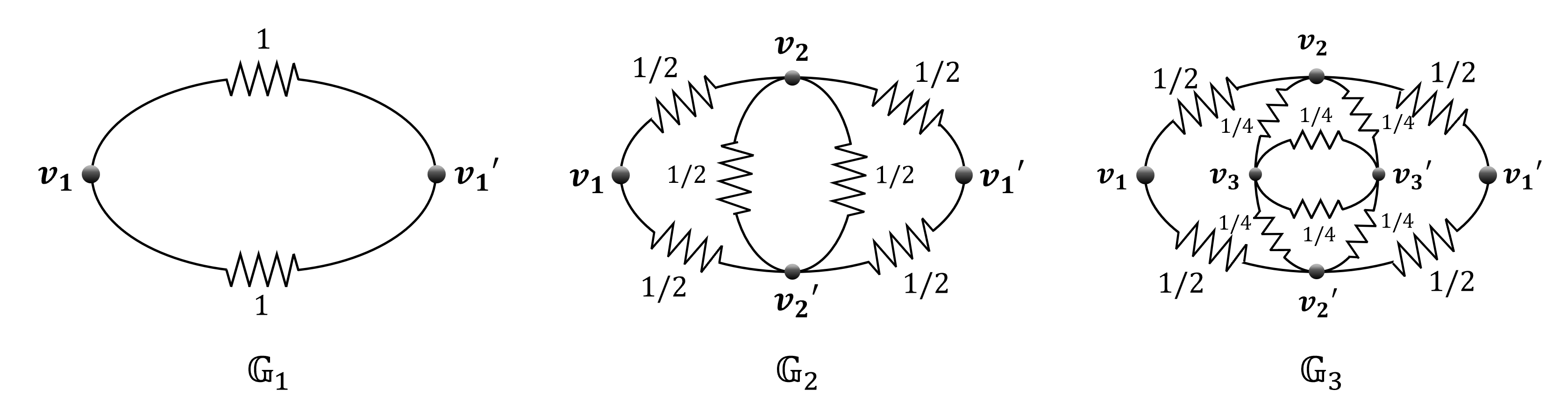}
                        \caption{\small Sequence of nested goemetric graphs with resistance}\label{Fig_Nested_withR}
                    \end{figure}
    \begin{theorem} \label{thm_polyofnest}
    The sequence $\psi_{L(\mathbb{G}_n)}(\lambda)$ satisfies the following equation
                \[
                        \psi_{L(\mathbb{G}_n)}(\lambda)=\left((\lambda-2^n)g_{n-1}(\lambda)
                        -4^{n}g_{n-2}(\lambda)\right)(\lambda-4)\prod_{i=2}^{n}(\lambda-3\cdot2^i)
                        \quad (n\geq 2)
                \]
    with the initial condition
                \begin{align*}
                        \psi_{L(\mathbb{G}_1)}(\lambda)=\lambda\,(\lambda-4),
                \end{align*}
    where $g_{n}(\lambda)$ meets the recurrence relation
                \[
                        g_{n}(\lambda)=(\lambda-3\cdot2^n)\,g_{n-1}(\lambda)
                        -2^{2n}\,g_{n-2}(\lambda)
                        \qquad (n\geq 2),
                \]
    with the initial conditions
                \[
                        g_{1}(\lambda)=\lambda-4 \text{~~and~~} g_{0}(\lambda)=1.
                \]
    \end{theorem}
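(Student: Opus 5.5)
The plan is to write $\lambda I-L(\mathbb{G}_n)$ as a block tridiagonal matrix of the form $T_n$ in Theorem~\ref{thm1}, with symmetric $2\times2$ diagonal blocks. Then Theorem~\ref{thm_sym} splits its determinant into a product of linear factors times a scalar tridiagonal determinant. I take the vertex order $v_1,v_1',v_2,v_2',\dots,v_n,v_n'$, so that the $i$-th block collects the pair $(v_i,v_i')$ created at stage $i$.

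\textbf{Step 1: the block structure.} I will use the recursive rule (Steps 1--4) to record the conductances of the final graph $\mathbb{G}_n$.
\begin{itemize}
\item The multiple edges joining $v_i$ and $v_i'$ have total conductance $w_i$.
\item The new vertex $v_{i+1}$ sits at the midpoint of an innermost edge, so after subdivision it is joined to both $v_i$ and $v_i'$. The same holds for $v_{i+1}'$. By the symmetry of the picture, every edge between level $i$ and level $i+1$ has the same conductance $c_i$.
\end{itemize}
Hence the off-diagonal blocks of $\lambda I-L$ are $c_iJ_2$, so $p_i=q_i=c_i$. The diagonal blocks have the form $\begin{bmatrix}a_i&b_i\\ b_i&a_i\end{bmatrix}$ with $a_i=\lambda-\delta_i$ and $b_i=w_i$, where $\delta_i$ is the weighted degree of $v_i$ (equal to that of $v_i'$ by symmetry). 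For $\mathbb{G}_1$ the matrix is $\begin{bmatrix}\lambda-2&2\\2&\lambda-2\end{bmatrix}$, whose determinant $\lambda(\lambda-4)$ gives the initial condition.

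\textbf{Step 2: the weights.} Tracking the halving of resistances in Steps 3 and 4 (so conductances double at each stage), I expect to obtain the following values.
\begin{itemize}
\item $a_1-b_1=\lambda-4$, and $a_i-b_i=\lambda-3\cdot2^i$ for $2\le i\le n$.
\item $a_i+b_i=\lambda-3\cdot 2^i$ for $1\le i\le n-1$, except that in the first block it equals $\lambda-2^{1}\cdot 2=\lambda-4$ in the normalization giving $g_1=\lambda-4$.
\item For the innermost pair, $a_n+b_n=\lambda-2^n$, because $v_n,v_n'$ have no neighbours at a deeper level.
\item $4c_{i}^2=4^{i+1}$.
\end{itemize}
The key point is that $a_i+b_i=\lambda-(\delta_i-w_i)$ is $\lambda$ minus the conductance from $v_i$ to the neighbouring levels only. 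This changes at the last level, since the inner neighbours are missing there, while $a_i-b_i$ does not change. This asymmetry is why the last factor in the bracket is $\lambda-2^n$ rather than $\lambda-3\cdot2^n$.

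\textbf{Step 3: applying Theorem~\ref{thm_sym}.} I apply Theorem~\ref{thm_sym} to get
\[
\psi_{L(\mathbb{G}_n)}(\lambda)=\tilde h_n\prod_{i=1}^n(a_i-b_i)=\tilde h_n\,(\lambda-4)\prod_{i=2}^n(\lambda-3\cdot 2^i).
\]
The scalar tridiagonal matrix behind $\tilde h_n$ agrees in its first $n-1$ rows and columns with the matrix whose determinants satisfy the recurrence for $g_k(\lambda)$. That recurrence follows from Lemma~\ref{lem_tridiagdet}, with diagonal entries $\lambda-3\cdot2^k$, off-diagonal products $4c_{k-1}^2=2^{2k}$, and $g_1=\lambda-4$, $g_0=1$. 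Expanding the last row of $\tilde h_n$ once more by Lemma~\ref{lem_tridiagdet} gives
\[
\tilde h_n=(\lambda-2^n)\,g_{n-1}-4c_{n-1}^2\,g_{n-2}=(\lambda-2^n)\,g_{n-1}-4^n g_{n-2},
\]
which is exactly the bracketed factor in the statement.

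\textbf{Main obstacle.} The matrix algebra is handled entirely by Theorem~\ref{thm_sym} and Lemma~\ref{lem_tridiagdet}. The hard part is Step 2: reading the weights off the figure and checking the degree bookkeeping at the first level (the outer boundary), at a generic level, and at the innermost level. The constants $4$, $3\cdot2^i$, $2^n$ and $4^n$ must come out exactly as stated. I would first carry out this bookkeeping by hand for $n=2$ and $n=3$, compare the resulting determinants with the claimed formula, and then argue the general case by induction on the stage of the construction.
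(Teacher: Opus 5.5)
Your proposal is correct and follows essentially the paper's route. The paper also reorders by the permutation $P$ and uses Lemma~\ref{lem_yang} (the content of Theorem~\ref{thm_sym}) to split $\psi_{L(\mathbb{G}_n)}$ into the diagonal factor $\psi_{A_n-B_n}=(\lambda-4)\prod_{i=2}^n(\lambda-3\cdot 2^i)$ and the tridiagonal factor $\psi_{A_n+B_n}$, whose last row it peels off with Lemma~\ref{lem_tridiagdet} exactly as you do; your values of $a_i\pm b_i$ and $4p_iq_i=4^{i+1}$ match the paper's explicit $L(\mathbb{G}_n)$, provided you say explicitly that $w_i=0$ for $i<n$, since the earlier $v_iv_i'$ edges have all been subdivided.
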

    \begin{proof}
    $\mathbb{G}_1$ has two multiple edges or two unit resistors in parallel.
    Hence,
                \[
                \psi_{L(\mathbb{G}_1)}(\lambda)
                        =det\,\left[\lambda I_2-\left[\begin{array}{rr}2 & -2\\ -2 & 2\end{array}\right]\right]
                        =\lambda(\lambda-4).
                \]
    $\mathbb{G}_2$ consists of six $r_2=1/2 \,\Omega$ resistors, including two parallel resistors
    as shown in Figure \ref{Fig_Nested_withR}.
    Arbitrary directions are assigned to each edge of $\mathbb{G}_2$, as in Figure \ref{Fig_Nested_DirG2}, to
    induce the incidence matrix $Q$.
                \begin{figure}[h]
                        \centering
                        \includegraphics[width=4cm]{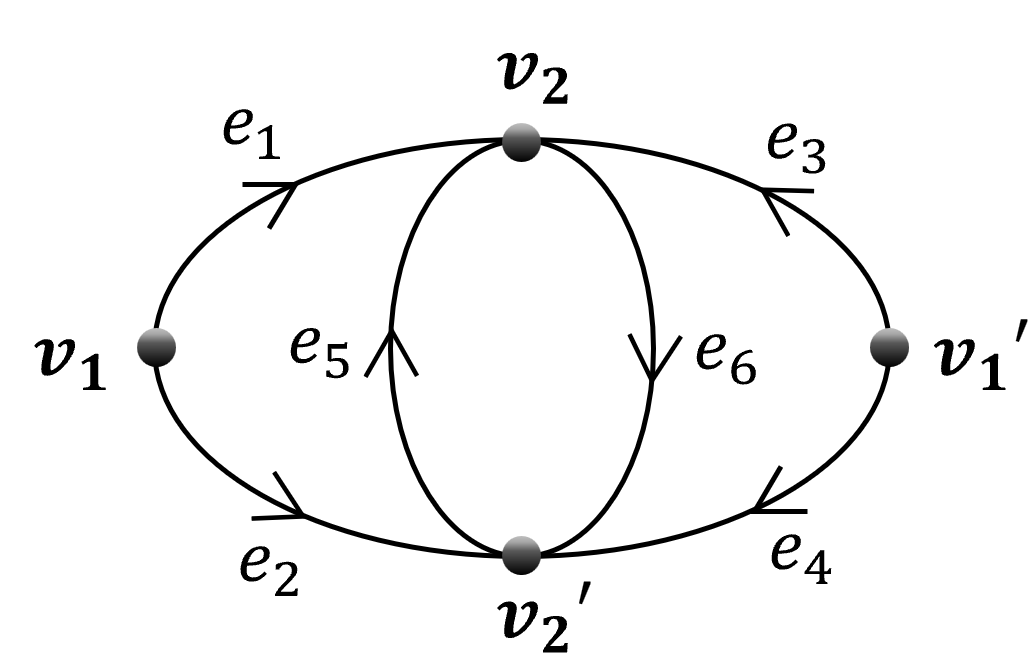}
                        \caption{\small Directed graph of $\mathbb{G}_2$ used to derive $Q$}\label{Fig_Nested_DirG2}
                \end{figure}
                \begin{align*}
                        &\begingroup
                                \renewcommand*{\arraystretch}{0}
                                \begin{matrix} ~~ &\scriptstyle{e_{1}}&\scriptstyle{~e_{2}\,}&\scriptstyle{~e_{3}\,}&\scriptstyle{~e_{4}\,}&\scriptstyle{~e_{5}\,}&\scriptstyle{~e_{6}}
                                \end{matrix}
                        \endgroup \\
                        Q=\begin{matrix}\scriptstyle{v_{1}}\\\scriptstyle{v_{1}'}
                        \\\scriptstyle{v_{2}}\\\scriptstyle{v_{2}'}\end{matrix}
                        &\begin{bmatrix*}[r]
                                    1&1&0&0&0&0\\0&0&1&1&0&0\\-1&0&-1&0&-1&1\\0&-1&0&-1&1&-1
                        \end{bmatrix*}.
                \end{align*}
    Given $Q$ and resistance matrix $D=r_2I_4$, $L(\mathbb{G}_2)$ is given by
                \begin{align*}
                        &L(\mathbb{G}_2)=QD^{-1}Q^T\\
                        &=\begin{matrix}\scriptstyle{v_{1}}\\\scriptstyle{v_{1}'}
                        \\\scriptstyle{v_{2}}\\\scriptstyle{v_{2}'}\end{matrix}
                        \begin{bmatrix}
                                                \scriptstyle{\frac{2}{r_2}}&&&\\
                                                &\scriptstyle{\frac{2}{r_2}}&&\\
                                                &&\scriptstyle{\frac{4}{r_2}}&\\
                                                &&&\scriptstyle{\frac{4}{r_2}}
                        \end{bmatrix}
                        -\begin{bmatrix}
                                                \scriptstyle{0}&\scriptstyle{0}&\scriptstyle{\frac{1}{r_2}}&\scriptstyle{\frac{1}{r_2}}\\
                                                \scriptstyle{0}&\scriptstyle{0}&\scriptstyle{\frac{1}{r_2}}&\scriptstyle{\frac{1}{r_2}}\\
                                                \scriptstyle{\frac{1}{r_2}}&\scriptstyle{\frac{1}{r_2}}&\scriptstyle{0}&\scriptstyle{\frac{1}{r_2}}+\scriptstyle{\frac{1}{r_2}}\\
                                                \scriptstyle{\frac{1}{r_2}}&\scriptstyle{\frac{1}{r_2}}&\scriptstyle{\frac{1}{r_2}}+\scriptstyle{\frac{1}{r_2}}&\scriptstyle{0}
                        \end{bmatrix}
                        =\left[
                            \begingroup
                                \renewcommand*{\arraystretch}{1.2}
                            \begin{array}{@{}r@{\hspace{8pt}}r@{\hspace{8pt}}r@{\hspace{8pt}}r}
                                                4&0&-2&-2\\0&4&-2&-2\\-2&-2&8&-4\\-2&-2&-4&8\end{array}
                            \endgroup
                        \right].
                \end{align*}
    The general form of $L(\mathbb{G}_{n})\in \mathbf{R}^{2n\times 2n}$ for $n\geq2$ is as follows:
                \begin{align*}
                        &\begin{matrix} ~ &\scriptstyle{v_1~v_1'}&\scriptstyle{v_2~v_2'}
                                        &\scriptstyle{v_3~v_3'}
                                        &\scriptstyle{~~\cdots\cdots\cdots\cdots~}
                                        &\scriptstyle{~v_{n-1}~v_{n-1}'}
                                        &\scriptstyle{~~v_n~v_n'}
                                \end{matrix}
                        \\
                        L(\mathbb{G}_n)=
                        &\small
                                \begingroup
                                \renewcommand*{\arraystretch}{1.6}
                                \begin{bmatrix*}[r]
                                        4I_2 & -2J_2 &O_2&&\cdots&O_2 \\
                                        -2J_2 & 12I_2 & -4J_2 &&&\vdots~~\\
                                        O_2& -4J_2 & 24I_2 & \ddots && \\
                                        && \ddots & \ddots & -2^{n-2}J_2 & O_2\\
                                        \vdots&&& -2^{n-2}J_2 & 3\cdot 2^{n-1}I_2 & -2^{n-1}J_2 \\
                                        O_2&\cdots&\textcolor{white}{\vdots}&O_2& -2^{n-1}J_2 & S~
                                \end{bmatrix*},
                                \endgroup
                \end{align*}
    where $S=\small\left[\begin{array}{cc}2^{n+1} & -2^{n}\\ -2^{n}& 2^{n+1}\end{array}\right]$.
    Since $L(\mathbb{G}_n)$ satisfies the conditions of Theorem \ref{thm_sym},
    applying this theorem alongside Lemma $\ref{lem_yang}$ yields
                \begin{align*}
                        \psi_{L(\mathbb{G}_n)}(\lambda)
                        &=\psi_{PL(\mathbb{G}_n)P^T}(\lambda) \\
                        &=det\left[\lambda\,I_{2n}- \left[\begin{array}{rr}A_n & B_n\\ B_n & A_n\end{array}\right]\right] \nonumber\\
                        &=\psi_{A_n+B_n}(\lambda)\,\psi_{A_n-B_n}(\lambda),
                \end{align*}
    where
                \begin{align*}
                                A_n &=
                                        \setlength{\arraycolsep}{4pt}
                                        \begin{bmatrix*}[r]
                                        4 & -2 & 0 & &\cdots& 0~~ \\
                                        -2 & 12 & -4 &&&\vdots~~\\
                                        0& -4 & 24 & \ddots && \\
                                        & &\ddots & \ddots & -2^{n-2} & 0~~\\
                                        \vdots&&& -2^{n-2} & 3\cdot 2^{n-1} & -2^{n-1} \\
                                        0&\textcolor{white}{\vdots}&\cdots&0& -2^{n-1} & 2^{n+1}
                                        \end{bmatrix*}
                                \quad\text{and}\quad B_n =
                                    \setlength{\arraycolsep}{4pt}
                                        \begin{bmatrix*}[r]
                                        0 & -2 & 0 &&\cdots& 0~~ \\
                                        -2 & 0 & -4 &&&\vdots~~\\
                                        0 & -4 & 0 & \ddots && \\
                                        && \ddots & \ddots & -2^{n-2}&0~~\\
                                        \vdots&&&-2^{n-2}  & 0~~ & -2^{n-1} \\
                                        0&\textcolor{white}{\vdots}&\cdots& 0& -2^{n-1} & -2^n~
                                        \end{bmatrix*}
                \end{align*}
    consequently,
                \begin{equation}\label{eqn_ApB}
                                A_n+B_n =
                                        \setlength{\arraycolsep}{4pt}
                                        \begin{bNiceMatrix}[r]
                                        4 & -4 & 0 &&\cdots& 0~~ \\
                                        -4 & 12 & -8 &&&\vdots~~\\
                                        0& -8 & 24 & \ddots && \\
                                        && \ddots & \ddots & -2^{n-1} & 0~~\\
                                        \vdots&&& -2^{n-1} & 3\cdot 2^{n-1} & -2^{n} \\
                                        0&\cdots&\textcolor{white}{\vdots}&0& -2^{n} & 2^n
                                        \CodeAfter
                                            \tikz \draw[blue, densely dashed] ([xshift=1pt]6-|1) -| (1-|6) ;
                                        \end{bNiceMatrix}
                \end{equation}
    and
                \begin{equation*}
                                A_n-B_n =
                                        \setlength{\arraycolsep}{4pt}
                                        \begin{bmatrix*}[r]
                                        ~4 & 0 &  &&\cdots& 0~~ \\
                                        ~0 & 12 & &&&\vdots~~\\
                                        &  & 24 & &&\textcolor{white}{\vdots} \\
                                        && & \ddots & & \\
                                        ~\vdots&&&  & 3\cdot 2^{n-1} & 0~~\\
                                        ~0&\textcolor{white}{\vdots}&\cdots& & 0& 3\cdot2^n
                                        \end{bmatrix*}
                \end{equation*}
    For $A_n-B_n$, we have
                \[
                        \psi_{A_n-B_n}(\lambda)=(\lambda-4)\prod_{i=2}^{n}(\lambda-3\cdot2^i).
                \]
    To derive the recurrence relation for $\psi_{A_n+B_n}(\lambda)$,
    we define $T_n$ as a sequence of matrices having the structure indicated by the blue dashed box of $A_n+B_n$ in (\ref{eqn_ApB}) as
                \begin{align*}
                T_n&=
                    \begingroup
                    \renewcommand*{\arraystretch}{1.1}
                        \begin{bmatrix*}[r]
                                        4 & -4 & 0 &\cdots& 0~~ \\
                                        -4 & 12 & -8 &&\vdots~~\\
                                        0& -8 & 24 & \ddots &0~~ \\
                                        \vdots&&\ddots& \ddots & -2^{n}\\
                                        0&\cdots\textcolor{white}{\vdots}&0& -2^{n} & 3\cdot 2^{n}
                        \end{bmatrix*}
                    \endgroup  \qquad (n\geq 2),\\
                T_1&=[4].
                \end{align*}
    Then, by Lemma \ref{lem_tridiagdet}, $\psi_{T_n}(\lambda)$ satisfies the recurrence relation
                \begin{align*}
                \psi_{T_n}(\lambda)&=(\lambda-3\cdot2^n)\,\psi_{T_{n-1}}(\lambda)
                                    -2^{2n}\,\psi_{T_{n-2}}(\lambda) \qquad (n\geq 2),\\
                \psi_{T_1}(\lambda)&=\lambda-4 \text{~~and~~} \psi_{T_0}(\lambda)=1.
                \end{align*}
    Hence, we obtain
                \[
                        \psi_{A_n+B_n}(\lambda)=(\lambda-2^n)\psi_{T_{n-1}}(\lambda)
                                                   -2^{2n}\psi_{T_{n-2}}(\lambda) \qquad (n\geq 2)
                \]
    as a consequence of the same Lemma again.
    Finally, if we replace $\psi_{T_n}(\lambda)$ with $g_n(\lambda)$ for simplicity,
                 \begin{align*}
                        \psi_{L(\mathbb{G}_n)}(\lambda)
                        &=\psi_{A_n+B_n}(\lambda)\,\psi_{A_n-B_n}(\lambda)\\
                        &=\left((\lambda-2^n)g_{n-1}(\lambda)
                        -2^{2n}g_{n-2}(\lambda)\right)(\lambda-4)\prod_{i=2}^{n}(\lambda-3\cdot2^i)
                        \quad (n\geq 2)
                \end{align*}
    with the initial condition
                $\psi_{L(\mathbb{G}_1)}(\lambda)=\lambda\,(\lambda-4)$,
    where $g_{n}(\lambda)$ is shown to meet the recurrence relation
                \[
                        g_{n}(\lambda)=(\lambda-3\cdot2^n)\,g_{n-1}(\lambda)
                        -2^{2n}\,g_{n-2}(\lambda)
                        \qquad (n\geq 2),
                \]
    with the initial conditions
                $
                        g_{1}(\lambda)=\lambda-4 \text{~~and~~} g_{0}(\lambda)=1.
                $
    \end{proof}
    The characteristic polynomial of $L(\mathbb{G}_n)$, computed recursively using MATLAB (version R2023a) \cite{Matlab} based on $\ref{thm_polyofnest}$, are as follows:
                \begin{quote}
                        $\scriptstyle{\psi_{L(\mathbb{G}_1)}(\lambda)=\lambda^2-4\,\lambda}$\\
                        $\scriptstyle{\psi_{L(\mathbb{G}_2)}(\lambda)=\lambda^4-24\,\lambda^3+176\,\lambda^2-384\,\lambda}$\\
                        $\scriptstyle{\psi_{L(\mathbb{G}_3)}(\lambda)=\lambda^6-64\,\lambda^5+1488\,\lambda^4-15360\,\lambda^3
                                                            +69120\,\lambda^2-110592\,\lambda}$\\
                        $\scriptstyle{\psi_{L(\mathbb{G}_4)}(\lambda)=\lambda^8-144\,\lambda^7+8016\,\lambda^6-220416\,\lambda^5
                        +3192320\,\lambda^4-24023040\,\lambda^3+85524480\,\lambda^2-113246208\,\lambda}$\\
                        $\scriptstyle{\psi_{L(\mathbb{G}_5)}(\lambda)=\lambda^{10}-304\lambda^9
                        +36688\lambda^8-2281216\lambda^7+79869440\lambda^6
                        -1620463616\lambda^5+2147483647\lambda^4}$\\
                        $\scriptstyle{~~~~\quad\quad-2147483648\lambda^3+2147483647\lambda^2-2147483648\lambda}$\\
                        $~\quad \quad \quad \quad \quad \vdots$
                \end{quote}
     The degree of the characteristic polynomial increases by two for every increment of $n$.
     As $n$ grows, the absolute values of the coefficients increase significantly.
    Using $\psi_{L(\mathbb{G}_n)}(\lambda)=\lambda^{2n}+ \cdots + a_2\lambda^{2}+a_1\lambda$,
    the Kirchhoff index is computed from equation (\ref{Kf_poly}) as
                        \[
                        K\!f(\mathbb{G}_{n})=2n\left|\frac{a_2}{a_1}\right|
                        \]
    for $1\leq n \leq 30$, and an unexpected result is observed in the plotted data.
                        \begin{figure}[h]
                        \centering
                        \includegraphics[width=8cm]{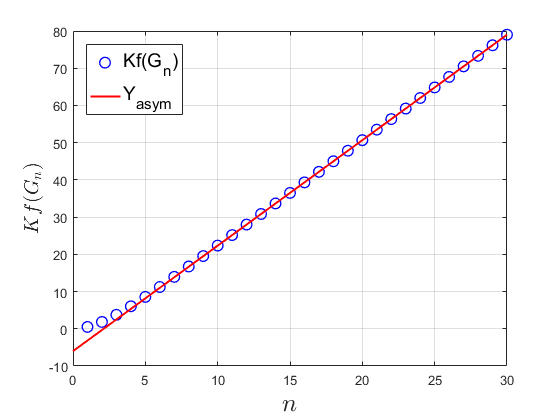}
                        \caption{\small Plot of $K\!f(\mathbb{G}_n)$ and $Y_{asym}$ for $1\leq n \leq 30$}\label{Kirchhoff_Gn_plot}
                        \end{figure}
    As shown in Figure \ref{Kirchhoff_Gn_plot}, the values (blue circles) exhibit an asymptotically linear increase with respect to $n$.
    The following theorem provides an explicit formula for $K\!f(\mathbb{G}_{n})$ as a function of $n$, which establishes that $K\!f(\mathbb{G}_{n})$ asymptotically approaches the line
                \[
                        Y_{asym}=\frac{17}{6}\,X-6=2.8\dot{3}\,X-6
                \]
    as $n \rightarrow \infty$ (see the red line in Figure \ref{Kirchhoff_Gn_plot}).
    \begin{theorem}\label{thm_Gn}
                For the sequence of graphs $\mathbb{G}_{n}$, $K\!f(\mathbb{G}_{n})$ is expressed as
                \[
                        K\!f(\mathbb{G}_{n})=\frac{17}{6}n-6+\frac{2n+9}{3\cdot 2^{n-1}}.
                \]
    \end{theorem}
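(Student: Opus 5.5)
The plan is to use formula (\ref{Kf_eigen}) with order $2n$, namely $K\!f(\mathbb{G}_n)=2n\sum 1/\lambda_i$ over the nonzero Laplacian eigenvalues, rather than extracting $a_1,a_2$ from the recurrence of Theorem \ref{thm_polyofnest}. The factorization $\psi_{L(\mathbb{G}_n)}=\psi_{A_n+B_n}\,\psi_{A_n-B_n}$ obtained in the proof of that theorem splits the spectrum into two parts, which I treat separately. Throughout I take $n\ge 2$ and check $n=1$ directly: $K\!f(\mathbb{G}_1)=2\cdot\frac14=\frac12$, which agrees with $\frac{17}{6}-6+\frac{11}{3}=\frac12$.

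\textbf{Step 1 (the $A_n-B_n$ part).} This block is diagonal, with eigenvalues $4$ and $3\cdot 2^i$ for $i=2,\dots,n$, all nonzero. The sum of their reciprocals is a finite geometric sum:
\[
\frac14+\frac13\sum_{i=2}^{n}2^{-i}=\frac{5}{12}-\frac{1}{3\cdot 2^{n}}.
\]

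\textbf{Step 2 (the $A_n+B_n$ part).} The key observation is that every row of $A_n+B_n$ in (\ref{eqn_ApB}) sums to zero and its off-diagonal entries are $-2^{k+1}$ for $k=1,\dots,n-1$. Hence $A_n+B_n$ is exactly the Laplacian of a weighted path $P_n$ on $n$ vertices. In this path, the $k$-th edge has conductance $2^{k+1}$, i.e.\ resistance $2^{-(k+1)}$. It follows that the single zero eigenvalue of $L(\mathbb{G}_n)$ lies in this block, and the remaining $n-1$ eigenvalues are its nonzero eigenvalues. Applying (\ref{Kf_eigen}) to $P_n$ gives $\sum 1/\mu_j = K\!f(P_n)/n$. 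For a tree, resistance distance is additive along paths. Edge $k$ separates $k$ vertices from $n-k$, so it lies on $k(n-k)$ of the vertex-pair paths, and
\[
K\!f(P_n)=\sum_{k=1}^{n-1}\frac{k(n-k)}{2^{k+1}}.
\]

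\textbf{Step 3 (assembling).} Combining the two parts,
\[
K\!f(\mathbb{G}_n)=2n\Big(\frac{5}{12}-\frac{1}{3\cdot2^{n}}\Big)+\sum_{k=1}^{n-1}\frac{k(n-k)}{2^{k}}.
\]
The remaining work is to evaluate $\sum_{k=1}^{n-1}k2^{-k}$ and $\sum_{k=1}^{n-1}k^2 2^{-k}$ in closed form. These are the standard finite sums $2-(n+1)2^{1-n}$ and $6-(n^2+2n+3)2^{1-n}$ with upper limit $n-1$, which I would derive by differentiating the geometric series or by telescoping. Then I would simplify. The leading terms give $\frac{5n}{6}+2n-6=\frac{17}{6}n-6$, and the $2^{-n}$ tails should collapse to $\frac{2n+9}{3\cdot2^{n-1}}$.

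The main obstacle is this last bookkeeping of the $2^{-n}$ terms, where index shifts are easy to get wrong. I would guard against errors by checking the result against $n=2$ and $n=3$, using the polynomials listed after Theorem \ref{thm_polyofnest}. For $n=2$ these give $4\cdot 176/384=11/6$, matching $\frac{17}{3}-6+\frac{13}{6}$. The conceptual step that makes everything tractable is recognizing $A_n+B_n$ as a weighted path Laplacian, which avoids solving the three-term recurrence for $g_n$.
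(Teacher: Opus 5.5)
Your proposal is correct and follows essentially the paper's proof: you use the same splitting $\psi_{L(\mathbb{G}_n)}=\psi_{A_n+B_n}\,\psi_{A_n-B_n}$, sum a geometric series over the diagonal block, recognize the other block as a weighted path Laplacian, and evaluate its Kirchhoff index via the edge counts $k(n-k)$ and the two differentiated geometric sums. The only cosmetic difference is that you treat $A_n+B_n$ itself as the path Laplacian (resistances $2^{-(k+1)}$), whereas the paper uses $(A_n+B_n)/4$ (resistances $2^{-(k-1)}$) with a factor $\tfrac12$; both give the same term $\sum_{k=1}^{n-1}k(n-k)2^{-k}$.
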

    \begin{proof}
    It is observed that
                \[
                                (A_n+B_n)/4 =
                                    \setlength{\arraycolsep}{4pt}
                                        \begin{bmatrix*}[r]
                                        1 & -1 & 0 &&\cdots& 0~~ \\
                                        -1 & 3 & -2 &&&\vdots~~\\
                                        0& -2 & 6 & \ddots && \\
                                        && \ddots & \ddots & -2^{n-3} & 0~~\\
                                        \vdots&&& -2^{n-3} & 3\cdot 2^{n-3} & -2^{n-2} \\
                                        0&\cdots&\textcolor{white}{\vdots}&0& -2^{n-2} & 2^{n-2}
                                        \end{bmatrix*}
                \]
    is the Laplacian matrix of the weighted path graph (denoted by $\mathbb{P}_n$) depicted in Figure \ref{img_weighted_path}.
                \begin{figure}[h]
                        \centering
                        \includegraphics[width=12cm]{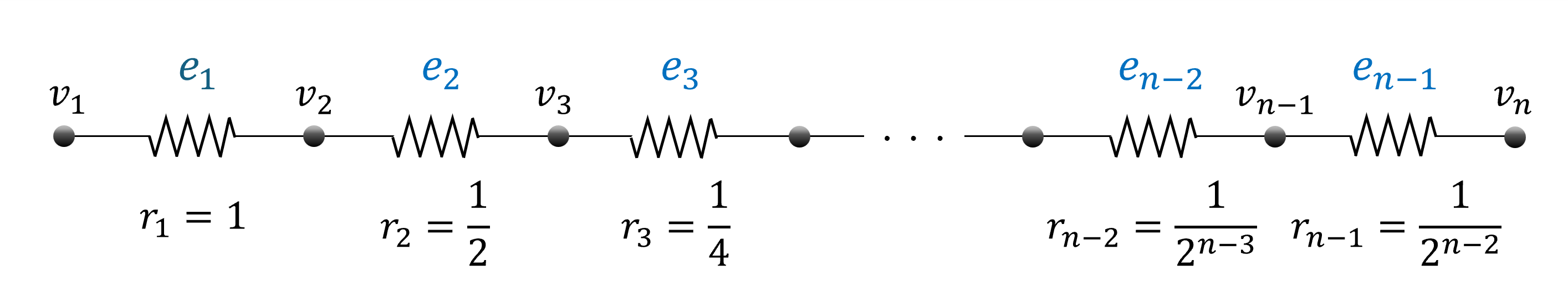}
                        \caption{\small Weighted path graph $\mathbb{P}_n$, where $L(\mathbb{P}_n)=(A_n+B_n)/4$}
                        \label{img_weighted_path}
                \end{figure}
    Assume that $0=\lambda_1\leq \lambda_2\leq \cdots \leq \lambda_n$ denote the eigenvalues of $(A_n+B_n)/4$ and that $\mu_1\leq \mu_2\leq \cdots \leq \mu_n$ denote those of $A_n-B_n$.
    Since
    $\psi_{L(\mathbb{G}_n)}(\lambda)=\psi_{A_n+B_n}(\lambda)\,\psi_{A_n-B_n}(\lambda)$
    and eigenvalues of $A_n+B_n$ are $4\lambda_i$ for $1\leq i \leq n$,
    it follows from equation (\ref{Kf_eigen}) that:
                \begin{align*}
                        K\!f(\mathbb{G}_{n})
                        &=2n\bigg(\sum_{i=2}^{n}\frac{1}{4\lambda_i}+\sum_{j=1}^{n}\frac{1}{\mu_j}\bigg)\\
                        &=\frac{1}{2}\bigg(n\sum_{i=2}^{n}\frac{1}{\lambda_i}\bigg)+2n\sum_{j=1}^{n}\frac{1}{\mu_j}\\
                        &=\frac{1}{2} K\!f(\mathbb{P}_{n}) + 2n\bigg(\frac{1}{4}+\sum_{j=2}^{n}\frac{1}{3\cdot 2^j} \bigg).
                \end{align*}
    Now, since $\mathbb{P}_{n}$ is a tree, the equivalent resistance between any two vertices $u$ and $v$ is the sum of resistances $r_k$ along the unique path connecting $u$ and $v$.
    In other words, for the edge set $E(\mathbb{T}_{n})$ of a tree $\mathbb{T}_{n}$,
                \[
                        K\!f(\mathbb{T}_{n})=\sum_{e_k\in E(\mathbb{T}_{n})}r_k\cdot S(e_k),
                \]
    where $S(e_k)$ is the number of subtrees of $\mathbb{T}_{n}$ containing an edge $e_k$.
    Therefore,
                \begin{align*}
                        K\!f(\mathbb{P}_{n})&=\sum_{k=1}^{n-1}r_k\cdot k(n-k)
                        =n\sum_{k=1}^{n-1}\frac{k}{2^{k-1}}-\sum_{k=1}^{n-1}\frac{k^2}{2^{k-1}}.
                \end{align*}
    By differentiating both sides of $\sum_{k=0}^{n-1}x^k=\frac{1-x^n}{1-x}$ with respect to $x$ and substituting $x=1/2$,
                \[
                \sum_{k=1}^{n-1}\frac{k}{2^{k-1}}=4-\frac{n+1}{2^{n-2}}
                \]
    is obtained. Similarly,
                \[
                \sum_{k=1}^{n-1}\frac{k^2}{2^{k-1}}=12-\frac{n^2+2n+3}{2^{n-2}}
                \]
    is obtained by differentiating $x\times \sum_{k=0}^{n-1}kx^{k-1}$ with respect to $x$ and evaluating at $x=1/2$.
    Hence,
                \begin{align*}
                        K\!f(\mathbb{P}_{n})
                        &=n\sum_{k=1}^{n-1}\frac{k}{2^{k-1}}-\sum_{k=1}^{n-1}\frac{k^2}{2^{k-1}}\\
                        &=n\left(4-\frac{n+1}{2^{n-2}}\right)-\bigg(12-\frac{n^2+2n+3}{2^{n-2}}\bigg)\\
                        &=4n-12+\frac{n+3}{2^{n-2}}.
                \end{align*}
    Finally,
                \begin{align*}
                        K\!f(\mathbb{G}_{n})
                        &=\frac{1}{2} K\!f(\mathbb{P}_{n}) + 2n\bigg(\frac{1}{4}+\sum_{j=2}^{n}\frac{1}{3\cdot 2^j} \bigg)\\
                        &=\frac{1}{2}\left(4n-12+\frac{n+3}{2^{n-2}}\right)
                                +2n\left(\frac{1}{4}+\frac{1}{6}\Big(1-\frac{1}{2^{n-1}}\Big) \right)\\
                        &=\frac{17}{6}n-6+\frac{2n+9}{3\cdot 2^{n-1}}.
                \end{align*}
    \end{proof}
    In the next corollary, when the sequence of graphs $\mathbb{G}_{n}$ is unweighted, $K\!f(\mathbb{G}_{n})$  can be computed more easily using the method described in the next section.
    \begin{corollary}\label{coro_Gn_unit}
    If all resistances of $\mathbb{G}_{n}$ are set to unity,
    $K\!f(\mathbb{G}_{n})$ is a cubic polynomial in $n$, given by:
                \[
                        K\!f(\mathbb{G}_{n})=
                        \begingroup
                        \renewcommand*{\arraystretch}{1.5}
                        \begin{cases}
                        \frac{1}{6}(n^3+3n^2+n) & ~n\geq 2\\[1.5ex]
                        \frac{1}{2}  &~ n=1
                        \end{cases}.
                        \endgroup
                \]
    \end{corollary}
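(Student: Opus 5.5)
We follow the approach of Theorem \ref{thm_Gn}: write down $L(\mathbb{G}_n)$ with all resistances equal to $1$, block-diagonalize it with the permutation of Theorem \ref{thm_sym} and Lemma \ref{lem_yang}, and then sum reciprocal eigenvalues using equation (\ref{Kf_eigen}).

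The case $n=1$ is immediate. $\mathbb{G}_1$ consists of two unit resistors in parallel between two vertices, so $\Omega(v_1,v_1')=1/2$ and $K\!f(\mathbb{G}_1)=1/2$.

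For $n\geq 2$, we read off the unit-weight Laplacian from the construction, exactly as for $L(\mathbb{G}_2)$.
\begin{itemize}
\item Each of $v_i,v_i'$ is joined by single edges to both $v_{i+1}$ and $v_{i+1}'$, so the off-diagonal blocks are $-J_2$.
\item $v_n$ and $v_n'$ are joined by a double edge.
\item Hence the diagonal blocks are $A_1=2I_2$, $A_i=4I_2$ for $2\le i\le n-1$, and $A_n=\begin{bmatrix}4&-2\\-2&4\end{bmatrix}$.
\end{itemize}
Every block has the form $\begin{bmatrix}a_i&b_i\\ b_i&a_i\end{bmatrix}$. So the permutation $P$ from the proof of Theorem \ref{thm_sym} together with Lemma \ref{lem_yang} gives
\[
\psi_{L(\mathbb{G}_n)}=\psi_{A+B}\,\psi_{A-B}.
\]
Here $A+B$ is tridiagonal with diagonal $(2,4,\dots,4,2)$ and off-diagonal entries $-2$, that is, $A+B=2L(P_n)$ for the unweighted path $P_n$. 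The matrix $A-B$ is diagonal, equal to $\mathrm{diag}(2,4,\dots,4,6)$.

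Let $\lambda_2,\dots,\lambda_n$ be the nonzero eigenvalues of $L(P_n)$. Equation (\ref{Kf_eigen}) then gives
\[
K\!f(\mathbb{G}_n)=2n\Big(\sum_{i=2}^n\frac{1}{2\lambda_i}+\frac12+\frac{n-2}{4}+\frac16\Big)
=K\!f(P_n)+n+\frac{n(n-2)}{2}+\frac{n}{3}.
\]
Since $P_n$ is a tree, we use the edge-sum formula from the proof of Theorem \ref{thm_Gn}:
\[
K\!f(P_n)=\sum_{k=1}^{n-1}k(n-k)=\frac{n^3-n}{6}.
\]
Adding the terms gives
\[
K\!f(\mathbb{G}_n)=\frac{n^3-n}{6}+\frac{n^2}{2}+\frac{n}{3}=\frac16(n^3+3n^2+n).
\]

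The only delicate step is getting the unit-weight Laplacian right, in particular the blocks at the two ends. $v_1$ has degree $2$, and the double edge at the innermost pair makes $a_n=4$, $b_n=-2$. An error here would change the end values of $A-B$. As a check, we would verify the formula at $n=2$: a direct computation there gives $\frac{1}{6}(8+12+2)=\frac{11}{3}$.
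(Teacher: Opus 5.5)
Your proof is correct and follows the route the paper intends. The paper gives no separate proof and only points to the Section~4 method, which is exactly what you do: split $\psi_{L(\mathbb{G}_n)}=\psi_{A+B}\,\psi_{A-B}$, recognize $(A+B)/2$ as the Laplacian of the unweighted path with $K\!f=\frac16(n^3-n)$, and add $2n\bigl(\frac12+\frac{n-2}{4}+\frac16\bigr)$ from $A-B=\mathrm{diag}(2,4,\dots,4,6)$; your Laplacian blocks, the separate $n=1$ case, and the check $K\!f(\mathbb{G}_2)=\frac{11}{3}$ are all right.
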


\section{Kirchhoff index of a 4-regular graph}
      The graph $\mathbb{G}_n$ considered in this study is non-simple and all the vertices except $v_{1}$ and $v_{1'}$ are of degree 4.
      In the process of generating $\mathbb{G}_{n}$ from $\mathbb{G}_{n-1}$, instead of connecting the two multiple edges $(v_n, v_n')$, we form four edges $(v_n, v_1), (v_n, v_1'), (v_n', v_1)$, and $(v_n', v_1')$, thereby producing a 4-regular graph, as shown in Figure $\ref{img_4regular}$.
      A 4-regular graph constructed from $\mathbb{G}_{n}$ in this manner always has an even number of vertices.
      Let $\bbGamma_{2m}$ denote such a 4-regular graph with $2m$ vertices.
                \begin{figure}[h]
                        \centering
                        \includegraphics[width=12cm]{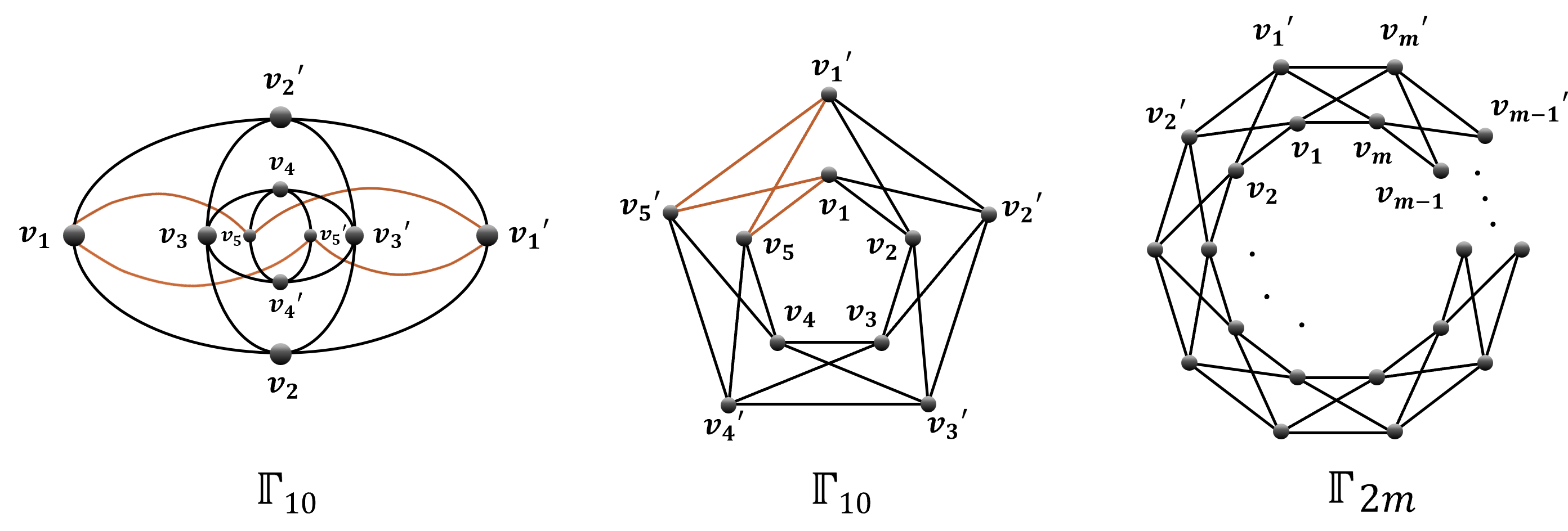}
                        \caption{4-regular graph \bbGammaT$_{10}$ and \bbGammaT$_{2m}$}
                        \label{img_4regular}
                \end{figure}
      It is found that $\bbGamma_{6}$ is the octahedron graph and $\bbGamma_{8}$ is the complete bipartite graph $\mathbb{K}_{4,4}$.
      The formula for $K\!f(\bbGamma_{2m})$ is presented in the next theorem.
      \begin{theorem}\label{thm_4reg}
      $K\!f(\bbGamma_{2m})$ of the 4-regular graph $\bbGamma_{2m}$ is give by
                \[
                        K\!f(\bbGamma_{2m})=\frac{1}{12}(m^3+6m^2-m)\qquad (m\geq 3),
                \]
      or
                \[
                        K\!f(\bbGamma_{n})=\frac{1}{96}(n^3+12n^2-4n)\qquad (n\geq 6, even).
                \]
      \end{theorem}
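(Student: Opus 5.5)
The plan is to compute $K\!f(\bbGamma_{2m})$ from equation (\ref{Kf_eigen}) by finding the whole Laplacian spectrum. We reduce $L(\bbGamma_{2m})$ with the same permutation trick used in Theorem \ref{thm_sym} and Lemma \ref{lem_yang}.

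\textbf{Step 1: structure of the Laplacian.} The construction replaces the double edge $(v_m,v_m')$ by the four edges joining $\{v_m,v_m'\}$ to $\{v_1,v_1'\}$. With all resistances equal to one, the Laplacian of the unweighted $\mathbb{G}_m$ has the block form of the proof of Theorem \ref{thm_polyofnest}, with unit weights. So $\bbGamma_{2m}$ has $m$ layers $\{v_i,v_i'\}$, with no edge inside a layer. Each pair of cyclically consecutive layers $i,i+1$ (indices mod $m$) is joined by a copy of $\mathbb{K}_{2,2}$.

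Hence $L(\bbGamma_{2m})$ is block circulant, with diagonal blocks $4I_2$ and blocks $-J_2$ in positions $(i,i\pm1)$ mod $m$. For $m\ge 3$ these are genuine cyclic neighbours, which is why that hypothesis is needed.

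\textbf{Step 2: splitting the spectrum.} Apply the permutation $P$ of Theorem \ref{thm_sym}, which separates the unprimed and primed vertices. This gives $PLP^T=\begin{bmatrix}A&B\\B&A\end{bmatrix}$ with
\[
A=4I_m-\mathrm{Adj}(\mathbb{C}_m),\qquad B=-\mathrm{Adj}(\mathbb{C}_m),
\]
where $\mathbb{C}_m$ is the cycle on $m$ vertices. By Lemma \ref{lem_yang},
\[
\psi_{L(\bbGamma_{2m})}=\psi_{A+B}\,\psi_{A-B}.
\]
Here $A-B=4I_m$, which contributes the eigenvalue $4$ with multiplicity $m$. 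Also $A+B=4I_m-2\,\mathrm{Adj}(\mathbb{C}_m)=2L(\mathbb{C}_m)$, whose nonzero eigenvalues are $2\theta_k$, where $\theta_2,\dots,\theta_m$ are the nonzero Laplacian eigenvalues of $\mathbb{C}_m$.

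\textbf{Step 3: the Kirchhoff index.} By (\ref{Kf_eigen}) with $2m$ vertices,
\[
K\!f(\bbGamma_{2m})=2m\Big(\frac{m}{4}+\frac12\sum_{k=2}^m\frac1{\theta_k}\Big)
=\frac{m^2}{2}+\sum_{k=2}^m \frac{m}{\theta_k}
=\frac{m^2}{2}+K\!f(\mathbb{C}_m),
\]
using (\ref{Kf_eigen}) once more for $\mathbb{C}_m$.

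The only remaining ingredient, and the main obstacle, is $K\!f(\mathbb{C}_m)$. The direct route would be the trigonometric sum $\sum_{k=1}^{m-1}\frac{1}{1-\cos(2\pi k/m)}=\frac{m^2-1}{6}$. I would instead avoid it with a resistance argument, in the spirit of the tree computation in Theorem \ref{thm_Gn}. Two vertices at cyclic distance $k$ on $\mathbb{C}_m$ are joined by two parallel paths of lengths $k$ and $m-k$, so $\Omega=k(m-k)/m$.

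Summing over all pairs gives
\[
K\!f(\mathbb{C}_m)=\frac{m}{2}\sum_{k=1}^{m-1}\frac{k(m-k)}{m}=\frac{m^3-m}{12}.
\]
Then
\[
K\!f(\bbGamma_{2m})=\frac{m^3-m}{12}+\frac{6m^2}{12}=\frac1{12}(m^3+6m^2-m).
\]

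\textbf{Step 4: sanity checks and the second form.} For $m=3$ the formula gives $K\!f=4.5$. This agrees with the octahedron, whose Laplacian spectrum is $0,4^3,6^2$: indeed $6(3/4+2/6)=6.5$ — wait, that is wrong, so recheck the spectrum via Step 2. The eigenvalues are $4$ (three times from $A-B$) and $2\cdot3=6$ (twice from $2L(\mathbb{C}_3)$), so $K\!f=6(3/4+1/3)=6.5$. The formula gives $(27+54-3)/12=6.5$, which agrees. Finally, substituting $n=2m$ yields $\frac1{96}(n^3+12n^2-4n)$.
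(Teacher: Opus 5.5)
Your proposal is correct and follows essentially the paper's route. The same permutation and Lemma~\ref{lem_yang} split $L(\bbGamma_{2m})$ into $A+B=2L(\mathbb{C}_m)$ and $A-B=4I_m$, which yields $K\!f(\bbGamma_{2m})=K\!f(\mathbb{C}_m)+m^2/2$. The only difference is that you derive $K\!f(\mathbb{C}_m)=(m^3-m)/12$ from the parallel-path resistances $k(m-k)/m$, whereas the paper simply cites it.

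Do tidy up Step 4. The claim that the formula gives $4.5$ at $m=3$ is an arithmetic slip, since it gives $78/12=6.5$. Your first octahedron computation $6(3/4+2/6)=6.5$ was already right, so the ``wait, that is wrong'' detour should be removed.
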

      \begin{proof}
      Since
                \[
                L(\bbGamma_{2m})=
                    \begingroup
                    \renewcommand*{\arraystretch}{1.1}
                            \begin{bmatrix*}[r]
                                        4I_2 & -J_2 &&&-J_2 \\
                                        -J_2 & 4I_2 & -J_2 &\textcolor{white}{\ddots}&\\
                                        & -J_2 & 4I_2 & \ddots & \\
                                        && \ddots & \ddots & -J_2  \\
                                        -J_2&\textcolor{white}{\ddots}&& -J_2 & 4I_2 \\
                            \end{bmatrix*},
                    \endgroup
                \]
    in a similar way to the proof of Theorem \ref{thm_polyofnest},
                \[
                        \psi_{L(\bbGamma_{2m})}(\lambda)
                        =\psi_{A_m+B_m}(\lambda)\,\psi_{A_m-B_m}(\lambda),
                \]
    where
                \begin{align*}
                                A_m+B_m &=
                                        \begin{bmatrix*}[r]
                                        4 & -2 &  & & -2 \\
                                        -2 & 4 & -2 &&\\
                                        & -2 & 4 &\ddots& \\
                                        & &\ddots & \ddots & -2\\
                                        -2&&& -2 & 4  \\
                                        \end{bmatrix*}
                                \quad \text{and} \quad A_m-B_m =
                                        \begin{bmatrix*}[c]
                                        4 &&&&  \\
                                        & 4 &&&\\
                                        && 4 && \\
                                        &&& \ddots &\\
                                        &&&& 4 \\
                                        \end{bmatrix*}
                \end{align*}
    We can see that $(A_m+B_m)/2$ corresponds to the Laplacian matrix of unweighted cycle graph $\mathbb{C}_m$.
    Let $\{0, \lambda_2, \lambda_3, \cdots , \lambda_m\}$ be the eigenvalues of $(A_m+B_m)/2$ and $\{\mu_1, \mu_2, \cdots , \mu_n\}$ be those of $A_m-B_m$.
    It is known that $ K\!f(\mathbb{C}_m)$ of $\mathbb{C}_m$ is expressed as \cite{lukovits1999resistance}
                \begin{equation} \label{eqn_Cn}
                        K\!f(C_m)=\frac{1}{12}(m^3-m).
                \end{equation}
    Therefore,
                \begin{align*}
                            L(\bbGamma_{2m})
                            &=2m\bigg(\sum_{i=2}^{n}\frac{1}{2\lambda_i}+\sum_{j=1}^{n}\frac{1}{\mu_j}\bigg)\\
                        &=\bigg(m\sum_{i=2}^{n}\frac{1}{\lambda_i}\bigg)+2m\sum_{j=1}^{n}\frac{1}{\mu_j}\\
                        &=K\!f(\mathbb{C}_{m}) + 2m\cdot \frac{1}{4}\cdot m\\
                        &=\frac{1}{12}(m^3+6m^2-m).
                \end{align*}
    \end{proof}

\section{Discussion}
    We examined the variation of the Kirchhoff index of a specific sequence of nested geometric graphs $\mathbb{G}_n$ and found that $K\!f(\mathbb{G}_n)$ grows almost linearly with $n$.
    Furthermore, we derived the exact solution for $K\!f(\mathbb{G}_n)$ and, subsequently, determined its asymptotic formula $Y_{asym}$.
    Among all simple connected unweighted graphs of order $n$, the path graph $\mathbb{P}_{n}$ achieves the maximum Kirchhoff index, given by
                        \[  K\!f(\mathbb{P}_n)=\frac{1}{6} (n^3-n), \]
    whereas the complete graph attains the minimum value (see \cite{lukovits1999resistance, rc1976ntringer})
                        \[ K\!f(\mathbb{K}_n)=n-1. \]
    $K\!f(\mathbb{C}_n)=1/12(n^3-n)$ for cycle graph $\mathbb{C}_n$ is introduced in equation (\ref{eqn_Cn}).
    While the Kirchhoff indices of $P_n$ and $C_n$ exhibit a cubic dependence on $n$, the complete graph $K_n$ shows a linear dependence on $n$.
                        \begin{figure}[h]
                        \centering
                        \includegraphics[width=8cm]{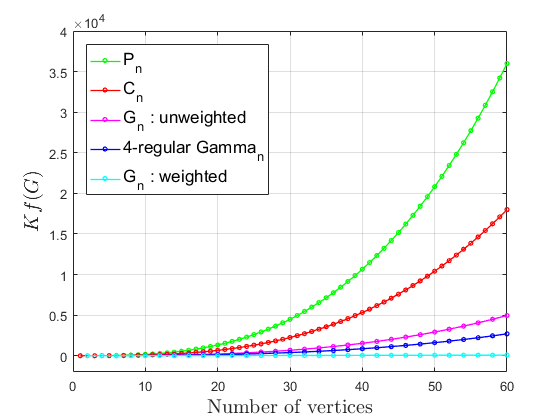}
                        \caption{\small Variation of $K\!f(\mathbb{G})$ with respect to number of vertices for five types of graphs}\label{Kirchhoff_compare}
                        \end{figure}
    Figure \ref{Kirchhoff_compare} compares the $K\!f(\mathbb{G})$ values for five classes of graphs:
    \begin{quote}
            path graph $\mathbb{P}_n$,\\
            cycle graph $\mathbb{C}_n$,\\
            $\mathbb{G}_n$ with unit resistors in Corollary \ref{coro_Gn_unit},\\
            4-regular graph $\bbGamma_{2m}$ in Theorem \ref{thm_4reg},\\
            and the original $\mathbb{G}_n$ in Theorem \ref{thm_Gn}.
    \end{quote}
    All Kirchhoff indices shown in the plot are calculated as functions of the number of vertices.

    The sequence of graphs analyzed in this study exhibits a self-repeating pattern.
    This property suggests potential applicability to geometries, such as well-known fractal structures characterized by self-similarity, which are observed across multiple scales.


\begin{thebibliography}{10}
\bibitem{klein1993resistance}
D.J. Klein, M.~Randić,
\newblock Resistance distance,
\newblock {\em J. Math. Chem.} 12 (1993) 81--95.
\bibitem{bonchev1994molecular}
D. Bonchev, A.T. Balaban, X. Liu, D.~J. Klein,
\newblock Molecular cyclicity and centricity of polycyclic graphs. I. cyclicity
  based on resistance distances or reciprocal distances,
\newblock {\em Int. J. Quantum Chem.}, 50(1) (1994) 1--20.
\bibitem{seshu1961linear}
S. Seshu, M.B. Reed,
\newblock {\em Linear graphs and electrical networks},
\newblock Addison-Wesley Publishing Company, 1961.
\bibitem{bapat2004resistance}
R.B. Bapat,
\newblock Resistance matrix of a weighted graph,
\newblock {\em MATCH Commun. Math. Comput. Chem.} 50 (2004) 73--82.
\bibitem{bapat2010graphs}
R.B. Bapat,
\newblock {\em Graphs and matrices},
\newblock Springer, 2010.
\bibitem{kelathaya2023generalized}
U. Kelathaya, R.B. Bapat, M.P. Karantha,
\newblock Generalized inverses in graph theory,
\newblock {\em AKCE Int. J. Graphs Comb.} 20(2) (2023) 108--114.
\bibitem{gutman1996quasi}
I. Gutman, B. Mohar,
\newblock The quasi-wiener and the kirchhoff indices coincide,
\newblock {\em J. Chem. Inf. Comput. Sci.} 36 (1996) 982--985.
\bibitem{bapat2003simple}
R.B. Bapat, I. Gutman, W. Xiao,
\newblock A simple method for computing resistance distance,
\newblock {\em Z. Naturforsch. A.} 58 (2003) 494--498.
\bibitem{kagan2015equivalent}
M. Kagan,
\newblock On equivalent resistance of electrical circuits,
\newblock {\em Am. J. Phys.} 83 (2015) 53--63.
\bibitem{gao2012kirchhoff}
X. Gao, Y. Luo, W. Liu,
\newblock Kirchhoff index in line, subdivision and total graphs of a regular graph,
\newblock {\em Discrete Appl. Math.} 160(4) (2012) 560--565.
\bibitem{wang2013laplacian}
W. Wang, D. Yang, Y. Luo,
\newblock The laplacian polynomial and kirchhoff index of graphs derived from regular graphs,
\newblock {\em Discrete Appl. Math.} 161(18) (2013) 3063--3071.
\bibitem{yang2008kirchhoff}
Y. Yang, H. Zhang,
\newblock Kirchhoff index of linear hexagonal chains,
\newblock {\em Int. J. Quantum Chem.} 108(3) (2008) 503--512.
\bibitem{sardar2020computation}
M.S. Sardar, X.F. Pan, S.A. Xu,
\newblock Computation of resistance distance and kirchhoff index of the two classes of silicate networks,
\newblock {\em Appl. Math. Comput.} 381 (2020) 125283.
\bibitem{liu2024extremal}
H. Liu, L. You,
\newblock Extremal kirchhoff index in polycyclic chains,
\newblock {\em Discrete Appl. Math.} 348 (2024) 292--300.
\bibitem{zhu2019normalized}
Z. Zhu, J.B. Liu,
\newblock The normalized laplacian, degree-kirchhoff index and the spanning
  tree numbers of generalized phenylenes,
\newblock {\em Discrete Appl. Math.} 254(15) (2019) 256--267.
\bibitem{sun2024resistance}
Wensheng Sun, Muhammad~Shoaib Sardar, Yujun Yang, and Shou-Jun Xu.
\newblock On the resistance distance and kirchhoff index of k n-chain (ring)
  network.
\newblock {\em Circuits, Systems, and Signal Processing}, 43(8):4728--4749, 2024.
\bibitem{horn2012matrix}
R.A. Horn, C.R. Johnson,
\newblock {\em Matrix analysis},
\newblock Cambridge university press, 2012.
\bibitem{muir2003treatise}
T. Muir, W.~H. Metzler,
\newblock {\em A Treatise on the Theory of Determinants},
\newblock Dover Publication, Inc., New York, 1960.
\bibitem{yang1985graph}
Y. Yang, T. Yu,
\newblock Graph theory of viscoelasticities for polymers with starshaped,
  multiple-ring and cyclic multiple-ring molecules,
\newblock {\em Macromol. Chem. Phys.} 186(3) (1985) 609--631.
\bibitem{Matlab}
{The MathWorks Inc.}
\newblock Matlab version: 9.14.0 (r2023a), 2023.
\bibitem{lukovits1999resistance}
I. Lukovits, S. Nikoli{\'c}, N. Trinajsti{\'c},
\newblock Resistance distance in regular graphs,
\newblock {\em Int. J. Quantum Chem.} 71(3) (1999) 217--225.
\bibitem{rc1976ntringer}
R.C. Entringer, D.E. Jackson, D.A. Snyder,
\newblock Distance in graphs,
\newblock {\em Czech. Math. J.} 26(2) (1976) 283--296.

\end{thebibliography}

\end{document}